\definecolor{darkgreen}{rgb}{0,0.6,0}
\definecolor{darkred}{rgb}{0.7,0,0}
\definecolor{darkblue}{rgb}{0,.2,.7}
\newtheorem{thm}{Theorem}[section]
\newtheorem{corollary}[thm]{Corollary}
\newtheorem{lemma}[thm]{Lemma}
\newtheorem{proposition}[thm]{Proposition}
\theoremstyle{remark}
\theoremstyle{definition}
\newtheorem{remark}[thm]{Remark}
\newtheorem{definition}[thm]{Definition}
\newtheorem{example}[thm]{Example}
\numberwithin{equation}{section}
\renewcommand\thanks[1]{%
  \begingroup
  \renewcommand\thefootnote{}\footnote{#1}%
  \addtocounter{footnote}{-1}%
  \endgroup
}
\renewcommand{\tilde}{\widetilde}
\renewcommand{\epsilon}{{\varepsilon}}
\newcommand{\Dir}{\mathsf{D}}
\newcommand{\Ham}{\mathsf{H}}
\newcommand{\Green}{\mathsf{G}}
\newcommand{\Mbc}{\mathsf{M}}
\newcommand{\oM}{\Mbc}
\newcommand{\oK}{\mathlarger{\mathfrak{S}}}
\newcommand{\oE}{{\mathfrak{E}}}
\newcommand{\oH}{{\mathfrak{G}}}
\newcommand{\oL}{{\mathfrak{L}}}
\newcommand{\Id}{{\textnormal{Id}}}
\newcommand{\f}{{\mathfrak{f}}}
\newcommand{\h}{{\mathfrak{h}}}
\newcommand{\RR}{{\mathbb{R}}}
\newcommand{\CC}{{\mathbb{C}}}
\newcommand{\R}{{\mathcal{R}^\wedge}}
\newcommand{\Rv}{{\mathcal{R}^\vee}}
\newcommand{\V}{{\mathcal{V}}}
\newcommand{\bR}{{\partial\mathcal{R}^\wedge}}
\newcommand{\bM}{{\partial\mathcal{M}}}
\newcommand{\M}{{\mathcal{M}}}
\newcommand{\N}{{\mathcal{N}}}
\newcommand{\T}{{\mathcal{T}}}
\newcommand{\n}{{\mathtt{n}}}
\newcommand{\cH}{{\mathscr{H}}}
\renewcommand{\i}{\imath}
\newcommand{\supp}{{\textnormal{supp\ }}}
\newcommand{\define}{\mathrel{\rm:=}}
\newcommand{\fiber}[2]{\langle  #1\,|\, #2  \rangle}
\newcommand{\scalar}[2]{(#1\, |\, #2)}
\author[N. Gro{\ss}e]{Nadine Gro{\ss}e} \address{N. Gro{\ss}e, Mathematisches Institut,
 Universit\"at Freiburg, 79104 Freiburg, Germany}
\email{nadine.grosse@math.uni-freiburg.de}
\author[S. Murro]{Simone Murro} \address{S. Murro, Mathematisches Institut,
 Universit\"at Freiburg, 79104 Freiburg, Germany}
\email{simone.murro@math.uni-freiburg.de}
\begin{document}

\title[The well-posedness of the Cauchy problem for the Dirac operator]{The well-posedness of the Cauchy problem for the Dirac operator on globally hyperbolic manifolds with timelike boundary}

 \thanks{Both authors are supported by the research grant { ``Geometric boundary value problems for the Dirac operator''} of the Juniorprofessurenprogramm Baden-W\"urttemberg. S.M. is partially supported within the DFG research training group GRK 1821 ``Cohomological Methods in
Geometry''.}

\subjclass[2010]{Primary 58J45; Secondary 53C50, 35L50, 35Q75}

\begin{abstract}
We consider the Dirac operator on globally hyperbolic manifolds
with timelike boundary and show well-posedness of the Cauchy initial boundary value problem coupled to MIT-boundary conditions. This is achieved by transforming the problem locally into a symmetric positive hyperbolic system, proving existence and uniqueness of weak solutions and then  using local methods developed by Lax, Phillips and Rauch, Massey to show smoothness of the solutions.
Our proof actually works for a slightly more general class of local boundary conditions.
\end{abstract}
\maketitle

\section{Introduction}

The well-posedness of the Cauchy problem for the Dirac operator on a Lorentzian manifold $(\overline \M, g)$ is a classical problem which has been exhaustively studied in many contexts. If the underlying background is \textit{globally hyperbolic}, a complete answer is known: In \cite{dimock} it was shown that a fundamental solution for the Dirac equation can obtained from a fundamental solution of the conformal wave operator via the Lichnerowicz formula. Since the Cauchy problem for the conformal wave operator is well-posed \cite{Choquet,Leray}, it follows that the Cauchy problem for the Dirac operator is also well-posed. See also \cite{bgp} for a direct treatment of Cauchy problems for Dirac operators.\medskip

Even if there exists a plethora of models in physics where globally hyperbolic spacetimes have been used as a background, there also exist many applications which require a manifold with non-empty boundary. Indeed, recent developments in quantum field theory focused their attention on manifolds with timelike boundary \cite{aqft,Zahn}, e.g.  anti-de Sitter spacetime \cite{Dappia1,Dappia2} and BTZ spacetime \cite{Dappia3}. Moreover, experimental setups for
studying the Casimir effect enclose (quantum) fields between walls, which may be mathematically described by introducing timelike boundaries  \cite{DappiaCas}. Also moving walls in a spatial set-up correspond to a timelike boundary in the Lorentzian manifold. In these settings, the correspondence between the well-posedness of the conformal wave operator and the Dirac operator breaks down due to the boundary condition. Even if the Cauchy problem for the conformal wave operator is proved to be well-posed for a large class of boundary conditions for stationary spacetimes  \cite{Dappia},  it is still not clear how to relate this with the Cauchy problem for the Dirac operator. In fact, solvability in the Dirac case very much depends on the boundary condition, e.g. if the Dirichlet boundary condition is applied to spinors on the boundary, in general there does not exist any smooth solution to the Dirac equation.\medskip

The goal of this paper is to investigate the well-posedness of the Cauchy problem for the Dirac operator in globally hyperbolic manifolds with timelike boundary  $\M$ in the sense of Definition~\ref{def: globally hyperbolic with timelike boundary}. Coming from the 'moving wall'-picture our time function and splitting should by induced from an exterior globally hyperbolic manifold: Let $(\overline{\M},g)$ be a globally hyperbolic spin manifold of dimension $n+1$.  Let $\N$ be an $n$-dimensional Lorentzian submanifold of $\overline{\M}$.  We assume additionally that $\N$ divides $\overline{\M}$ into two (not empty) connected components.  In Section~\ref{sec:geom} we see that each of these connected components is then globally hyperbolic manifolds with timelike boundary and that every globally hyperbolic manifolds with timelike boundary arises that way. In particular, there we see that any Cauchy time function  $t\colon \overline{\M}\to \RR$ on $\overline{\M}$ induces one on the connected component. Thus we can work with the splitting and time function induced from $\overline{\M}$ in the following: Then $\overline{\Sigma}_s\define t^{-1}(s)$ is a smooth spacelike Cauchy surface of  $\overline{\M}$ and,  see Section~\ref{sec:geom}, $\widehat{\Sigma}\define \overline \Sigma\cap \N$ is a Cauchy surface for $\N$. In particular,  $\{t^{-1}(s)\}_{s\in \RR}$ gives a foliation by Cauchy surfaces, and we set $\Sigma_s\define t^{-1}(s)\cap \mathcal{M}$. \medskip 

We always assume that we fix the spin structure on $\M$.  Let $S\M$ denote the spinor bundle over $\M$ and $S\Sigma_0$ the induced spinor bundle over $\Sigma_0$. Moreover, let $\Dir$ be the Dirac operator on $S\M$, for details and notation see Section~\ref{sec:preliminaries}.\medskip
 
 Our main result is a well-posedness theorem for the Dirac operator with MIT-boundary conditions. The \textit{MIT boundary condition} is a local boundary condition that was introduced for the first time in \cite{MIT1} in order to reproduce the confinement of quark in a finite region of space: ``Dirac waves'' are indeed reflected on the boundary. Few years later, it was used in the description of hadronic states like baryons \cite{MIT2} and mesons \cite{MIT3}. More recently, the MIT boundary condition were employed in \cite{Casimir} for computing the Casimir energy in a a three-dimensional rectangular box, in \cite{FKSY} and \cite{FR1,FR2} in order to construct an integral representation for the Dirac propagator in Kerr-Newman and Kerr Black Hole Geometry respectively, and in \cite{IR} for proving the asymptotic completeness for linear massive Dirac fields on the Schwarzschild Anti-de Sitter spacetime.\medskip
 
 In this paper we obtain
 
  \begin{thm}\label{maintheorem}
The Cauchy problem for the Dirac operator with MIT-boundary condition on a globally hyperbolic spin manifold $\M$ with timelike boundary $\bM$ is well-posed, i.e., for any $f\in\Gamma_{cc}(S\M)$ and $h\in\Gamma_{cc}(S\Sigma_0)$   there exists a unique smooth solution $\psi$  with spatially compact support to the mixed initial-boundary value problem
\begin{equation}\label{CauchyDir}
\begin{cases}
\Dir \psi=f   \\
\psi|_{\Sigma_0} ={h} \\
(\gamma({\n})  - \imath ) \psi|_{\bM} =0
\end{cases}
\end{equation}
 which depends continuously on the data $(f,h)$. Here, $\gamma(\n)$ denotes Clifford multiplication with ${\n}$ the outward unit normal on $\bM$ and $\Gamma_{cc}(.)$ denotes the space of sections that are compactly supported in the interior of the underlying manifold.
\end{thm}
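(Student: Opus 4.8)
The plan is to prove \eqref{CauchyDir} by localizing, solving the mixed problem in each small coordinate patch via the $L^2$-theory of symmetric positive hyperbolic systems with boundary, upgrading the weak solutions to smooth ones by the boundary regularity theory of Lax--Phillips and Rauch--Massey, and finally gluing the local pieces together using uniqueness and finite propagation speed; global hyperbolicity of $\M$ — equivalently, that $\Sigma_0$ is a Cauchy surface — is what forces the local solutions to fill out all of $\M$. Concretely, I would fix the Cauchy temporal function $t$ with its foliation $\{\Sigma_s\}_{s\in\RR}$ and cover $\M$ by a locally finite family of relatively compact coordinate cylinders $\Omega\cong(-\e,\e)\times V$, where $V$ is either contained in the interior $\intM$, or is a boundary chart in which $\bM\cap\Omega=\{x^1=0\}$ and the outward unit normal $\n$ is parallel to $\partial_{x^1}$. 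Over such an $\Omega$ we trivialize $S\M$ and, dividing $\Dir\psi=f$ by the invertible Clifford action $\gamma(\partial_t)$, rewrite it as a first-order system $\partial_t\psi+\sum_j A^j\partial_j\psi+B\psi=\tilde f$ whose coefficients $A^j$ are Hermitian for the positive-definite spinor inner product $\langle\cdot,\cdot\rangle\define(\gamma(\partial_t)\,\cdot\,|\,\cdot)$; after the gauge transformation $\psi\mapsto e^{\lambda t}\psi$ with $\lambda$ large this is symmetric positive in the sense of Friedrichs. Since $\n$ is spacelike, $\gamma(\n)$ is invertible, so $A^1|_{x^1=0}$ is invertible and $\bM$ is \emph{non-characteristic} — the setting of the cited regularity theory.

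The next point is to check that the MIT condition $(\gamma(\n)-\imath)\psi|_{\bM}=0$ is an \emph{admissible} boundary condition for this system: fiberwise over $\bM$ it projects onto the $\imath$-eigenbundle of $\gamma(\n)$, a smooth subbundle of constant rank equal to half the fiber dimension (the spectrum of $\gamma(\n)$ being $\{\pm\imath\}$), and on this subbundle the boundary bilinear form $\langle A^1\,\cdot\,,\,\cdot\,\rangle=(\gamma(\n)\,\cdot\,|\,\cdot)$ vanishes identically — the MIT condition makes the flux through $\bM$ zero — so the subbundle is maximal semidefinite, with the adjoint boundary condition of the same type. (The generalization mentioned in the introduction is precisely the class of local boundary conditions meeting these structural requirements.) Granting admissibility, the standard energy identity — integrate $2\,\mathrm{Re}\,\langle\Dir\psi,\psi\rangle$ over $\M\cap\{0\le t\le T\}$, the contribution over $\bM$ being controlled by admissibility — yields
\[
\|\psi\|^2_{L^2(\Sigma_T)}\ \le\ C\Big(\|\psi\|^2_{L^2(\Sigma_0)}+\int_0^T\|f\|^2_{L^2(\Sigma_s)}\,ds\Big)
\]
together with its analogue for the backward adjoint problem. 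The first estimate gives uniqueness and finite propagation speed; the second, through the Hahn--Banach/duality argument of Friedrichs, gives existence of an $L^2$ weak solution of \eqref{CauchyDir} in $\Omega$ (the initial datum $h$ being absorbed after subtracting a fixed extension).

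It then remains to make the weak solution smooth and to globalize. Away from $\bM$ one applies the interior regularity theory for symmetric hyperbolic systems (tangential difference quotients, the equation recovering the missing normal derivative); near $\bM$ one applies the boundary differentiability theorem of Rauch--Massey (and Lax--Phillips), and here the hypothesis $f\in\Gamma_{cc}(S\M)$, $h\in\Gamma_{cc}(S\Sigma_0)$ pays off: since the data vanish near the corner $\bM\cap\Sigma_0$, all corner compatibility conditions hold automatically, so the theorem applies and gives $\psi\in\Gamma(S\M|_\Omega)$. Finally, uniqueness together with finite propagation speed lets me glue the local solutions across overlaps into a smooth solution on a neighborhood of $\Sigma_0$, and then — advancing along the foliation and using that $\Sigma_0$ is Cauchy, so that $\M=D^{+}(\Sigma_0)\cup D^{-}(\Sigma_0)$ — extend it to all of $\M$; spacelike compactness follows from finite propagation speed and compactness of $\supp f\cup\supp h$, and continuous dependence on $(f,h)$ follows from the energy estimate and its higher-order analogues.

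The main obstacle I expect to be the boundary regularity step: one must verify in detail that the MIT condition (and the generalized class) genuinely satisfies the hypotheses of the Rauch--Massey/Lax--Phillips differentiability theorem — maximality, constant rank and smoothness of the boundary subbundle, and in particular the borderline \emph{conservative} (non-strictly-dissipative) character of the condition — and then run the induction on the Sobolev order with tangential derivatives up to $\bM$, carefully treating the normal derivatives through the equation and the non-characteristic structure. The reduction to a symmetric positive hyperbolic system and the $L^2$ well-posedness, by contrast, are essentially bookkeeping once the admissibility of the boundary condition has been established.
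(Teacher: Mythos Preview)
Your proposal is correct and follows essentially the same route as the paper: the transformation $\oK=-\imath\gamma(e_0)\Dir+\lambda\,\Id$ (your ``divide by $\gamma(\partial_t)$ and gauge by $e^{\lambda t}$'') to a symmetric positive hyperbolic system, the energy inequality for uniqueness and finite propagation speed, the dual inequality plus a Riesz/Hahn--Banach argument for weak existence, the Lax--Phillips/Rauch--Massey theory for regularity (with the observation that $f\in\Gamma_{cc}(S\M)$, $h\in\Gamma_{cc}(S\Sigma_0)$ trivializes the corner compatibility conditions), and finally gluing in time.

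The one organizational difference worth flagging is where the existence argument lives. You propose to solve the weak problem in each local coordinate cylinder $\Omega$ and then glue in space as well as in time; the paper instead proves weak existence directly on the whole time strip $\T=t^{-1}([0,T])$, running the duality argument with the adjoint energy inequality on future cones $\Rv=J^+(\mathcal O')\cap\T$, and localizes \emph{only} for the regularity step. The paper's arrangement is slightly cleaner because it never introduces artificial lateral boundaries in interior patches (your sketch is silent on what happens at $\partial V$ when $V\subset\intM$; one would have to invoke finite propagation speed to stay inside a domain-of-dependence cone, or extend coefficients to $\RR^n$). Conversely, your fully local formulation makes the appeal to the Euclidean theorems of \cite{LP,RM} more immediate. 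Either ordering works.
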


For a subclass of stationary  spacetimes  with timelike boundary admitting a suitable timelike Killing vector field Theorem~\ref{maintheorem} was already proven in \cite{FR1}.

\begin{remark}\label{rem:bd}
Theorem~\ref{maintheorem} holds under more general local boundary conditions, namely replacing $\Mbc=\gamma(\n)-\imath$ by any linear non-invertible map $\Mbc\colon \Gamma(S\bM) \to \Gamma(S\bM)$ with constant kernel dimension and such that $\Mbc\psi|_\bM=0$ and $\Mbc^\dagger\psi|_\bM=0$, see Remark~\ref{rem:ad} for the definition of the adjoint boundary condition  $\Mbc^\dagger$,  both imply
\begin{equation}\label{general bound cond}
  \fiber{\psi}{\gamma(e_0)\gamma(\n)\psi}_q =0
\end{equation}
 for all $q\in \bM$, see also Remark~\ref{rem:bd2}. Here $e_0$ is the globally defined unit timelike future pointing vector field defined by~\eqref{def:e_0}.
\end{remark}

As usual,  the well-posedness of the Cauchy problem will  guarantee the existence of \textit{Green operators} for the Dirac operator. In globally hyperbolic spin manifolds with empty boundary, these operators play a fundamental role in the quantization of linear field theory~\cite{DHP,dimock}. In loc. cit., the quantization of
a free field theory is interpreted as a two-step procedure: The first consists of the assignment to a physical system of a  $^*$-algebra of observables which
encodes structural properties such as causality, dynamics and the canonical anticommutation relations.
The second step calls for the identification of an algebraic state, which is a positive, linear
and normalized functional on the algebra of observables.
This quantization scheme goes under the name of \emph{algebraic quantum field theory} and it is especially well-suited for formulating quantum theories for Green-hyperbolic operators also on manifold -- see e.g.~\cite{aqft2,gerard} for textbook, to~\cite{BG,BD,FK} for recent reviews,  and~\cite{simo5,simo4,DHP,simo6,simo3,simo1,simo2} for some applications.

\begin{remark}The Cauchy problem~\eqref{CauchyDir} is still well-posed in a larger class of initial data. But then some compatibility condition for $f$ and $h$ on $\partial\Sigma_0$ is needed---see Remark~\ref{star}.
In the subclass of stationary spacetimes considered in \cite{FR1}, as mentioned above, this compatibility condition reduces to the one therein. Without these compatibility conditions the solution would still exist but the singularities contained in a neighborhood of $\partial\Sigma_0$ would propagate with time along lightlike geodesics. 
In case, they hit again the boundary some boundary phenomena as reflection will occur.  For future work it is of course interesting to obtain  an explicit method  to  construct the  corresponding Green operators and to obtain more information on how singularities behave when hitting the boundary.
\end{remark}

Our strategy to prove the well-posedness of the Cauchy problem is as follows: Firstly
we restrict to a  strip of finite time and prove general properties of smooth solutions as finite propagation of speed and uniqueness in Section~\ref{sec:fps}. Using the theory of symmetric positive hyperbolic systems, see e.g. \cite{Fr2}, we investigate properties of a weak solution in Section~\ref{sec:ws}. 
In Section~\ref{diffsol} we localize the problem by introducing suitable coordinates small enough that we can associate to our Dirac problem a hyperbolic system that fits into the class considered in \cite{LP, RM} and thus provides smoothness of a weak solution. Then we can use these properties to prove existence of a weak solution in an arbitrary  time strip.
With the help of uniqueness we can then easily glue together solutions on an arbitrary time strip to  obtain global ones in Section~\ref{sec:gs}.\medskip

\noindent \textit{Acknowledgements.} 
We would like to thank Claudio Dappiaggi, Nicol\'o Drago, Felix Finster, Umberto Lupo, Oliver Petersen and Christian R\"oken for helpful discussions. 

\section{Preliminaries}\label{preliminaries}

For a Lorentzian manifold $\M$, we use the convention that the metric $g$ of $\M$ has signature$ (-,+, \cdots,  +)$.

Globally hyperbolic as in \cite{bgp,Ge} means that the manifold admits a Cauchy surface, i.e.  
a hypersurface that is hit exactly once by every inextendable timelike curve. Moreover, this surface can always be chosen to be smooth and spacelike, see \cite{BS}. Without further mentioning it, we always assume that our manifolds are orientable and time-orientable. Note that for  $n+1=4$, the case relevant for physics, such globally hyperbolic manifolds are automatically spin.

\subsection{Globally Hyperbolic Manifolds with Timelike Boundary}\label{sec:geom}
The definition of what should be a globally hyperbolic manifold with timelike boundary was first proposed by Solis in~\cite{Sol}, see also~\cite{CGS}.
\begin{definition}\label{def: globally hyperbolic with timelike boundary}
A \emph{globally hyperbolic manifold with timelike boundary} is an $(n + 1)$-dimensional, oriented,
time-oriented Lorentzian manifold $(\M,g)$ with boundary $\bM$ such that
\begin{itemize}
\item[(i)] the pullback of $g$ respect the natural inclusion $\iota\colon \bM \to \M$ defines a Lorentzian metric $\iota^*g$ on the boundary;
\item[(ii)] $\M$ is causal, i.e. there are no closed causal curves;
\item[(iii)] for every point $p,q\in\M$, $J^+(p)\cap J^-(q)$ is compact, where $J^+(p)$ (\textit{resp.} $J^-(p)$) denotes the causal future (\textit{resp}. past) of $p\in M$.
\end{itemize}
\end{definition}

In \cite{AFS} Ak{\'e}, Flores and Sanchez proved that for a globally hyperbolic manifold with timelike boundary there  always exists a Cauchy time function (i.e., a continuous function $\M\to \mathbb R$ which increases strictly on any future-directed causal curve and  whose level sets are Cauchy surfaces, see \cite[p. 65]{Beem})  whose gradient  is tangent to the boundary and obtain the analogue of the usual spacetime splitting of globally hyperbolic manifolds.\medskip

Thinking of the occurrence of manifolds with timelike boundary as described  in the introduction, e.g. 'moving mirrors', it is more natural for us, to obtain our manifold as a part of a globally hyperbolic manifold without boundary and to work with the induced time function. We prove in the following that under natural assumptions this always leads to a globally hyperbolic manifold with timelike boundary as defined above and all such manifolds are obtained that way.\medskip 

In particular, we will see that  we obtain  \cite[Thm. 1.1]{AFS} except for the condition that the gradient of the Cauchy time function is tangent to the boundary. Most of this can of course be found along the lines of the proofs in  \cite{CGS,AFS, ake}. Especially that every globally hyperbolic manifold with timelike boundary arises that way is shown in proven in \cite[Cor. 5.8]{AFS}. But for further use we  decided to still formulate the following theorems here.

\begin{thm}\label{thm_char_globhyp}
 Let $(\tilde{\M}, \tilde{g})$ be a globally hyperbolic manifold (without boundary). Let $\iota\colon \N \hookrightarrow \tilde{\M}$ be an embedding of a timelike hypersurface such that  $\tilde{\M}\setminus \N$ has two connected components $\M_i$, $i=1,2$. Then, $\overline{\M}_i= \M_i\cup \N\subset \tilde{M}$ is a globally hyperbolic manifold with  timelike boundary and $(\N, \tilde{g}|_{\N})$ is globally hyperbolic. 
 Moreover, each  globally hyperbolic manifold with timelike boundary arises as a $\overline{\M}_i$ in a construction as above.
\end{thm}

\begin{proof} Property (i) in Definition~\ref{def: globally hyperbolic with timelike boundary} is satisfied since $\overline{\partial\M}_i:=\N$ is timelike by assumption.
Moreover, every closed causal curve in $\N$ or $\bar{\M}_i$ is one in $\M$ and, hence, does not exist. In order to show that property (iii) is satisfied, we first show that $\N$ and $\overline{\M}_i$ are strongly causal. Notice that, since
 $(\tilde{M}, \tilde{g})$ is globally hyperbolic,  it is in particular strongly causal \cite[p. 59]{Beem}. We first show, that $(\N, \tilde{g}|_{\N})$ is strongly causal as well: Let $p\in \N$ and let $U$ be an open neighborhood of $p$ in $\N$. Choose an open neighborhood  $\tilde{U}$ of $p$ in $\tilde{\M}$ such that $\tilde{U}\cap \N=U$. Since $\tilde{\M}$ is strongly causal, there is a neighborhood $\tilde{V}\subset \tilde{U}$ of $p$ in $\tilde{\M}$ such that any causal curve starting and ending in $\tilde{V}$ is completely contained in $\tilde{U}$. Let now $V\define \tilde{V}\cap \N$. Then any causal curve in $\N$ starting and ending in $V$ is in particular a causal curve of $\tilde{\M}$ starting and ending in $\tilde{V}$ and hence completely stays in $U=\tilde{U}\cap \N$.
Analogously, we see that $(\overline{\M}_i, \tilde{g})$ is strongly causal.\medskip 

Furthermore, we note that both $\N$ and $\overline{\M}_i$  inherit the  time-orientation 
from $\tilde{\M}$ and that $\mathcal{N}$ has to be closed since $\mathcal{M}\setminus \mathcal{N}$ has two connected components. \medskip 

Let $p,q\in \N$. Let $J_\pm^\N(.)$ denote that causal past/future inside $\N$ and $J_\pm^{\tilde{\M}}(.)$ the ones inside $\tilde{\M}$. Note that $J_\pm^\N(p)\subset J_\pm^{\tilde{\M}}(p)\cap \N$.  Hence $J_-^\N(p)\cap J_+^\N(q)$ is a subset of the compact set $J_-^{\tilde{\M}}(p)\cap J_+^{\tilde{\M}}(q)\cap \N$.  This implies that the closure of $J_-^\N(p)\cap J_+^\N(q)$ is compact in $\N$. Together with the strong causality of $\N$ \cite[Lemma 4.29]{Beem} implies that $J_-^\N(p)\cap J_+^\N(q)$ is already compact and, hence, $(\N, \tilde{g}|_{\N})$ is globally hyperbolic.

We note that \cite[Lemma 4.29]{Beem} has exactly the same proof if we allow the manifold to have boundary. Hence, the analogue arguments from above imply that $(\overline{\M}_i, \tilde{g})$ is globally hyperbolic as well.\medskip

Let now $(\mathcal{M}, g)$ be a globally hyperbolic manifold with timelike boundary. As stated before this is  \cite[Cor. 5.8]{AFS}. To provide a  short-cut let us shortly present the strategy of the proof: Every Lorentzian manifold $(\mathcal{M}, g)$ with boundary admits an extension to a Lorentzian manifold $(\hat{\mathcal{M}}\define  \mathcal{M} \cup_{\partial \mathcal{M}} (\partial  \mathcal{M}\times [0,\epsilon)) , \hat{g})$ such that $\hat{g}|_{\mathcal{M}}=g$  \cite[Theorem A.1]{Sol}. On a given manifold with boundary the set of metrics that make this manifold into a globally hyperbolic one with timelike boundary is open in the set of Lorentzian metrics in the  'time cone topology' (defined by the $<$-relation on the space of Lorentzian metric as in \cite[Section 6]{Ge}, see also \cite[below Def. 2.3]{AFS}) and hence in the fine $C^0$-topology: This can be seen exactly along the lines of the proof of Geroch in \cite{Ge}. Hence, there is a smooth function $\delta\colon \partial \mathcal{M}\to (0,\epsilon)$ such that $(\mathcal{M}_1\define \mathcal{M}\cup_{\partial \mathcal{M}}  \{ (x,t)\ |\ x\in \partial \mathcal{M}, 0< t\leq \delta(x)\}, \hat{g})$ is a globally hyperbolic manifold with timelike boundary. Now we can deform $\hat{g}$ on a small enough neighborhood of $\partial \M_1$ to have product structure on this subset of $\M_1\setminus \M$ and such that the new metric $g_1$ is still globally hyperbolic on $\M_1$. The double $(\hat{\M}_1, \hat{g}_1)$ of $(\M_1, g_1)$  is then a smooth Lorentzian manifold without boundary with $(\M,g)\subset (\hat{\M}_1, \hat{g}_1)$. The double $(\hat{\M}_1, \hat{g}_1)$ is itself globally hyperbolic since every causal curve between two points in $\partial \M_1\subset \hat{\M}_1$ can be mapped using the reflection symmetry of the double to a causal curve staying in $\M_1$.
\end{proof}

\begin{thm}
  Let $(\tilde{\M}, \tilde{g})$ be a globally hyperbolic manifold (without boundary). Let $\iota\colon \N \hookrightarrow \tilde{\M}$ be the embedding of a timelike hypersurface such that  $\tilde{\M}\setminus \N$ has two connected components $\M_i$, $i=1,2$. Let $t\colon \tilde{M}\to \mathbb R$ be a Cauchy time function for $\tilde{M}$. Then,  $t^{-1}(s)\cap \N$ is a Cauchy surface of $\N$  and $t^{-1}(s)\cap \overline{\mathcal{M}}_i$ one of $\overline{\mathcal{M}}_i$ for all $s\in \mathbb R$. Moreover, $t|_{\N}\colon \N\to \mathbb R$ is a Cauchy time time function of $\N$.    
\end{thm}

\begin{proof}From Theorem~\ref{thm_char_globhyp} we know that $\mathcal{N}$ with its induces metric is globally hyperbolic.

   For any $s$, the spacelike $\tilde{\Sigma}_s\define t^{-1}(s)$ and the timelike $\N$ automatically intersect transversally. Hence, if $\Sigma_s\define \tilde{\Sigma}_s\cap \N$ is nonempty, it has to be a submanifold of $\tilde{\M}$ of codimension 2 and a spacelike hypersurface of $\N$.  In particular, such $\Sigma_s$ is a Cauchy surface for $\N$: Let $\gamma$ be a future directed inextendable causal curve in $\N$.  Since $\N$ is closed, $\gamma$ has to be defined on an open interval, say $\gamma\colon (a,b) \to \N$.  Then, $\gamma$ is also a  future directed causal curve in $\tilde{\M}$. Assume that $\gamma$ is extensible as a future directed causal  curve in $\tilde{\M}$, w.l.o.g. let $\tilde{\gamma}\colon (a,b]\to  \tilde{\M}$. Since $\N$ is closed, $\tilde{\gamma}(b)\in \N$ which gives the  contradiction.   Hence, $J_-^\N (\Sigma_s)= t^{-1}(-\infty, s] \cap \N$ if $\Sigma_s$ is nonempty. At least for some $u\in \mathbb R$ there has to be a point $p\in \Sigma_u$ since $\N$ is nonempty. The arguments from above show that any  future directed inextendable causal curve $\gamma$ in $\N$ through $p$ is already a future directed inextendable causal curve in $\tilde{\M}$ and, hence, intersects all $\tilde{\Sigma}_s$.  Thus, $\Sigma_s$ is a Cauchy surface of $\N$ for all $s\in \mathbb R$ and $t|_{\N}\colon \N\to \mathbb R$ is a Cauchy time time function of $\N$.    
   Analogously one sees that $t^{-1}(s)\cap \overline{\M}_i$ is a Cauchy surface of $\overline{\M}_i$.
   \end{proof}

 \subsection{Spinorial preliminaries}\label{sec:preliminaries}

In the following we always assume that the spin structure is fixed.  We denote by $S\M$ the associated \textit{spinor bundle}, that is in particular a complex vector bundle with $N\define2^{\lfloor \frac{n+1}{2}\rfloor}$-dimensional fibers, denoted by $S_p\M$ for $p\in \M$, fiberwise endowed  with a hermitian product
$$\fiber{\cdot}{\cdot}\colon S_p\M \times S_p\M \to \CC \,$$
  and with a \textit{Clifford multiplication} $\gamma\colon T\M\to \text{End}(S\M)$ that is in particular fiber-preserving and satisfies for all $p\in \M$ and $u,v\in T_p\M$ that 
  \[ \gamma(u)\gamma(v)+\gamma(v)\gamma(u)=-2g(u,v) \text{Id}_{S_p\M}.\] 
We denote the by $\Gamma_c(\cdot)$, $\Gamma_{cc}(\cdot)$, $\Gamma_{sc}(\cdot)$ resp. $\Gamma(\cdot)$ the spaces of compactly supported, compactly supported in the interior, spacelike compactly supported resp. smooth sections of a vector bundle. 
The \textit{(classical) Dirac operator} $\Dir \colon \Gamma(S\M) \to \Gamma(S\M)$ is defined as the composition of the connection $\nabla$ on $S\M$, obtained as a lift of the Levi-Civita connection on $T\M$, and the Clifford multiplication times $\i$. Thus, in local coordinates this reads as
$$\Dir = \sum_{\mu=0}^{n} \imath \varepsilon_\mu \gamma(e_\mu) \nabla_{e_\mu}  $$
where  $(e_\mu)_{\mu=0,\dots,n}$ is a local Lorentzian-orthonormal frame of $T\M$, $\varepsilon_\mu=g(e_\mu,e_\mu)=\pm 1$.\medskip

We recall that after the choice of the Cauchy time function $t$ the metric on the  globally hyperbolic manifold $(\tilde{\M},g)$ (without boundary) can be written as $g=g|_{\Sigma_t}-\beta^2 dt^2$ where $\beta\colon \tilde{\M}\to \mathbb R$ is a positive smooth function \cite{BS}. This representation we will also use for $\bar{M}_i\subset \tilde{M}$ as in Theorem~\ref{thm_char_globhyp}. Thus, although the local orthonormal frame $e_\mu$ in general only exists locally, we  have a globally defined unit timelike vector field  
\begin{equation}\label{def:e_0}
e_0\define \frac{1}{\beta} \partial_t
\end{equation}   that we will use in the following. In particular we have $\gamma(e_0)^{-1}=\gamma(e_0)$.

\subsection{Reformulation as a symmetric positive hyperbolic system}
In this section we will formulate the Dirac equation~\eqref{CauchyDir} locally as a symmetric positive hyperbolic system. For that we shortly recall the basic definition, for more details see \cite{Fr1, Fr2}.\medskip

For the following definition, let $E\to\M$ be a real (or complex) vector bundle with finite rank $N$ endowed with  the canonical fiberwise metric $\fiber{\cdot}{\cdot}\define \fiber{\cdot}{\cdot}_{\mathbb R^N}$ (or $\fiber{\cdot}{\cdot}:=\fiber{\cdot}{\cdot}_{\mathbb C^N}$). Moreover, let us endow ${\Gamma_{cc}(E)}$ with the $L^2$-scalar product $$\scalar{\cdot}{\cdot}_{\M}\define\int_\M \fiber{\cdot}{\cdot} \text{Vol}_\M \,,$$
where $\text{Vol}_\M$ denotes the volume element. Moreover, let $\Vert.\Vert_{L^2(\M)}^2\define \scalar{.}{.}_{\M}$.

\begin{definition}\label{def:symm syst}
 A linear differential operator $\oL \colon \Gamma(E) \to \Gamma(E)$ of first order is called a \emph{symmetric system} over $\M$ if 
\begin{enumerate}
\item[(S)] the principal symbol $\sigma_\oL (\xi) \colon E_p \to E_p$ is hermitian with respect to $\fiber{\cdot}{\cdot}$ for every $\xi\in T^*_p\M$ and for every $p \in \M$.
\end{enumerate} 
  Additionally, we say that $\oL$ is \emph{positive} respectively \emph{hyperbolic} if it holds:
\begin{enumerate}
\item[(P)] The bilinear form $\fiber{ \, (\oL+\oL^\dagger ) \cdot}{\cdot}$ on $E_p$ is positive definite, where $\oL^\dagger$ denotes the formal adjoint of $\oL$ with respect to the $L^2$-product on $\Gamma_{cc}(E)$ and $\Re$ denotes the real part.
\item[(H)] For every future-directed timelike covector $\tau \in T_p^*\M$, the bilinear form $\fiber{\sigma_\oL (\tau) \cdot}{\cdot}$ is positive definite on $E_p$.
\end{enumerate}
\end{definition}
Let us recall that for a first-order linear operator $\oL \colon \Gamma( E) \to \Gamma( E)$ the principal symbol $\sigma_\oL\colon T^*\M \to \text{End}(E)$ can be characterized by $\oL( f u) = f \oL u + 	\sigma_\oL (d f )u$ where $u \in \Gamma(E)$ and $f \in C^\infty(\M)$.
If we choose local coordinates $(t, x^1, \ldots, x^n)$ on $\M$, with $x^i$ local coordinates on $\Sigma_t$, and a local trivialization of $E$, any linear differential operator $\oL\colon \Gamma(E) \to \Gamma(E)$ of first order reads in a point $p\in \M$ as
$$\oL\define  A_0(p) \partial_t + \sum_{j=1}^n A_j(p) \partial_{x^j} + B(p)$$
where the coefficients $A_0, A_j,B$ are $N\times N$ matrices, with $N$ being the rank of $E$, depending smoothly on $p\in\M$.
In these coordinates, Condition~(S) in Definition~\ref{def:symm syst} reduces to 
$$A_0=A_0^\dagger \qquad \text{and} \qquad A_j=A_j^\dagger$$
 for $j=1,\dots, n$. Condition~(P) reads as
\begin{align}\label{def:kappa}  \kappa\define \oL+\oL^\dagger = B +B^\dagger - \frac{\partial_t (\sqrt{g}A_0)}{\sqrt{g}}- \sum_{j=1}^n \frac{\partial_{x^j} (\sqrt{g}A_j)}{\sqrt{g}}>0,\end{align}
where $g$ is the absolute value of the determinant of the Lorentzian metric. Condition (H) can be stated as follows: For any future directed, timelike covector $\tau=dt + \sum_j \alpha_j dx^j$,  $$ \sigma_\oL(\tau)=A_0 + \sum_{j=1}^{n} \alpha_j A_j \quad \text{is positive definite.}$$

\begin{remark}\label{Dir on halfMink} 
With the above definition, we can immediately notice that the Dirac operator does not give rise to a symmetric system. Consider for example the half Minkowski spacetime $\mathscr{M}^4\define \RR^3\times[0,\infty)$ endowed with the standard metric $-dt^2+dx^2+dy^2+dz^2.$ In this setting the Dirac operator reads as
$$
\Dir=-\imath \gamma(e_0)\partial_t+ \imath  \gamma(e_1) \partial_{x} +\imath \gamma(e_2) \partial_{y} +\imath \gamma(e_3) \partial_{z}
$$
where $\gamma(e_i)$ are the Dirac matrices. By straightforward computation, we obtain 
\begin{itemize}
\item $(\i\gamma (e_j))^\dagger=\i\gamma(e_j)$ for $j\geq 1$ while $(-\i\gamma(e_0))^\dagger=\imath\gamma(e_0)$ which violates condition (S); 
\item $\kappa=0$ and hence condition (P) is violated;
\item $\sigma_\Dir (dt)= -\i \gamma(e_0)$ is not positive definite, therefore condition (H) is violated. 
\end{itemize}
 \end{remark}
 
Nonetheless, it still possible to find a fiberwise invertible endomorphism $Q \in \Gamma(\text{End}(S\M))$ such that  $Q \circ \Dir $ is  a symmetric hyperbolic system such that  for any compact subset of $\M$ the operator  $Q \circ \Dir + \lambda \Id$ is also a positive hyperbolic system for a suitable $\lambda >0$ as we will see below.

\begin{lemma}\label{Equivalence Dir and K}
Consider a globally hyperbolic spin manifold $\M$ with boundary $\bM$. Let $\Dir$ be the Dirac operator  and $e_0$, $\beta$ as in \eqref{def:e_0}. Then for $\lambda\in \mathbb R$ the first order differential operator $\oK_{\lambda}\colon \Gamma(S\M) \to \Gamma(S\M)$ defined by
\begin{align}\label{eq:K}\oK_{\lambda}\define\imath \gamma(e_0) \beta \Dir + \lambda \,
\Id\end{align}
is a symmetric hyperbolic system and its Cauchy problem 
\begin{equation}\label{CauchyK}
\begin{cases}{}
{\oK_{\lambda} }\Psi=\f \in \Gamma_{c}(S\M) \\
\Psi|_{\Sigma_0} = \mathfrak{h}\in \Gamma_{c}(S\Sigma_0) \\
{\oM}\Psi|_{\bM} =0
\end{cases} 
\end{equation}
is equivalent to the Cauchy problem  for the Dirac operator
\begin{equation}\label{CauchyDir2}
\begin{cases}{}
\Dir \psi=f \in \Gamma_c (S\M)  \\
\psi|_{\Sigma_0} ={h}\in \Gamma_c(S\Sigma_0) \\
\Mbc \psi|_{\bM} =0.
\end{cases}
\end{equation}
Moreover, for any compact set $\mathcal{R}\subset \M$, there exists a $\lambda>0$ such that $\oK_{\lambda}$ is a symmetric positive hyperbolic system. 
\end{lemma}
\begin{proof}
Since $\imath\gamma(e_0)\beta\Dir$ is a symmetric hyperbolic system, see e.g. \cite[Chapter~3]{Nicolas}, $\oK_{\lambda}$ is symmetric and hyperbolic for any $\lambda\in\RR$ and any positive smooth function $\beta$. Next we verify that the Cauchy problem for $\oK_{\lambda}$ and for $\Dir$ are equivalent for any $\lambda\in \RR$. We set $\Psi\define e^{-\lambda t} \psi$, which implies $\h \define e^{- \lambda t} h$, and $\f \define\imath e^{-\lambda t} \beta \gamma(e_0) f$ and obtain using \eqref{def:e_0}
\begin{align*}
\oK_{\lambda}\Psi=\oK_{\lambda}(e^{-\lambda t} \psi) = (\imath\gamma(e_0)\beta \Dir + \lambda\Id)(e^{-\lambda t}\psi) = \imath e^{-\lambda t} \gamma(e_0)\beta  \Dir \psi = \imath e^{-\lambda t} \beta \gamma(e_0) f.\end{align*}
Moreover, since $e^{-\lambda t} \neq 0$ for all $t\in\RR,$ we obtain the inverse map $(\f, \h) \mapsto (f,h)$ and we have
$$\oM \Psi|_\bM =   e^{\i\lambda t} \Mbc  \psi|_{\bM} = 0 \quad \text{if and only if} \quad \Mbc\psi|_\bM =0.
$$
This gives the equivalence of the two problems.\medskip 

Let now $\mathcal{R}\subset \M$ be compact. It remains to check the positivity condition (P) for $\oK_\lambda$. The operator $\kappa$, defined in \eqref{def:kappa} for an arbitrary  first order operator, is a zero order operator. Therefore, for any compact set $\mathcal{R}$, $\kappa|_{\Gamma(S\mathcal{R)}}$ is bounded and there exists a suitable $\lambda$ such that $\kappa$ is positive definite on $\mathcal{R}$. 
\end{proof}

\begin{example}\label{example2}
Let us consider the half Minkowski spacetime $\mathscr{M}^4$ and the Dirac operator
$$
\Dir=-\imath \gamma(e_0)\partial_t+ \imath  \gamma(e_1) \partial_{x} +\imath \gamma(e_2) \partial_{y} +\imath \gamma(e_3) \partial_{z} 
$$
from Remark~\ref{Dir on halfMink}. We can see that the operator 
$$\oK_{\lambda}=\imath \gamma(e_0)\Dir + \lambda\Id= \partial_t -\gamma(e_0)\gamma(e_1)\partial_x -\gamma(e_0)\gamma(e_2)\partial_y-\gamma(e_0)\gamma(e_3)\partial_z+ \lambda\Id$$ 
is a \emph{symmetric positive hyperbolic system} for any $\lambda>0$ on account of
\begin{align*}
 \big(\gamma(e_0)\gamma(e_i)\big)^\dagger= \gamma(e_i)^\dagger \gamma(e_0)^\dagger = - \gamma(e_i)\gamma(e_0) =  \gamma(e_0)\gamma(e_i) \quad \text{ for  $i=1,2,3$, and}
\end{align*}
 \end{example}
$\kappa=\oK_\lambda+\oK_\lambda^\dagger=2  \lambda \Id_{4\times 4}$ and $\sigma_\oK_{\lambda}(dt)=\Id_{4\times4}$.

\begin{remark}\label{rem:pos}
We note that one does not necessarily has to work with symmetric \emph{positive} hyperbolic systems but symmetric hyperbolic systems probably would suffice if one uses a slightly different approach. We decided to require positivity since we go for a energy inequality as in Lemma~\ref{lemma: energy estimates} which is easy to obtain and enough for our purpose. One probably also can obtain energy inequalities as in \cite[Thm. 5.3]{Ba} without the positivity.  The positivity requirement is also the reason why we work mostly on  time strips and/or compact subsets. But at the end by uniqueness of solutions we will still obtain a global solution in Section~\ref{sec:gs}.
\end{remark}

\section{Local well-posedness of the Cauchy problem}

 Let $\Dir$ be the Dirac operator on our globally hyperbolic spin manifold with timelike boundary $\bM$.  We denote by $\T$ the \textit{time strip} given by
$$\T\define t^{-1}([0,T])$$
where $T>0$ and $t\colon \M\to \RR$ is the chosen Cauchy time function (For negative times see Remark~\ref{rem-negT}). Let $\mathcal{O}$ be a compact subset of $\Sigma_T$.
 Let $\lambda>0$ be such that the operator $\oK_{\lambda}\colon \Gamma(S\M) \to \Gamma(S\M)$ defined as in \eqref{eq:K} is a symmetric positive hyperbolic system on 
$$\R\define J^-( \mathcal{O})\cap \mathcal{T}$$
 where  $J^-(\mathcal{O})$ (resp. $J^+(\mathcal{O})$) denotes the \textit{past set} (resp. \emph{future set}), namely the set of all points that can be reached by past-directed resp. future-directed causal curves emanating from a point in $\mathcal{O}$, cp. Figure~\ref{fig1}. This is always possible by Lemma~\ref{Equivalence Dir and K} and since  $\R\subset \M$ is compact. For the reason why we choose $\R$ as above compare below---especially Theorem~\ref{thm:Weak existence}.\medskip 

 \begin{figure}
 \begin{tikzpicture}
\begin{scope}    
    \clip {(9,1.75) .. controls (6,0.75) and (2,2.75).. (0.8,2) -- (0.8,2)  .. controls (1.2,1.75)  and (0.75,0.9) .. (1.0,0.25)  -- (1.0,0.25) .. controls (2,1) and (6,-1) .. (9,0) };     
    \fill[yellow!40] { (0.9,0.3) -- (0.95,0.95) -- (2.5,2.05) -- (6,1.55) -- (7.8,-0.25) --(0.9,0.-3)-- (0.9,0.3) };      
\end{scope}

\draw[red!80] (2.5,2.05) -- (0.95,0.95);
\draw[red!80] (6,1.55) -- (7.8,-0.25);

 \draw[]  (1.0,0.25) .. controls (2,1) and (6,-1) .. (9,0) ; 
 \draw[]  (0.8,2) .. controls (2,2.75) and (6,0.75) .. (9,1.75); 
  \draw[]  (1.0,0.25) .. controls (0.75,0.9) and (1.2,1.75) .. (0.8,2); 
    \draw[dashed]  (0.9,-0.5) .. controls (1.1,-0.15) .. (1.0,0.25); 
     \draw[dashed]  (0.8,2) .. controls   (0.5,2.35) .. (0.6,2.75); 

  \draw[decorate, decoration=snake, segment length=5,very thick] (2.5,2.05)  -- ( 6,1.55)  node[midway,above,rotate=-1] {$\mathcal{O}$}  ; 

            \node  at (4.35,0.3) [label={$\R$}] { };
       \node  at (9.5,-0.45) [label={$\Sigma_0$}] { };
        \node  at (9.5,1.35) [label={$\Sigma_T$}] { };

\draw[color=blue, ->] (7,0.56)--(7.5,1.1) node[right] {n};
\draw[color=blue, ->] (3.5,1.9)--(3.64,2.6) node[right] {n};
\draw[color=blue, ->] (4,0.1)--(3.9,-0.6) node[right] {n};
\draw[color=blue, ->] (1.6,1.4)--(1.1,2) node[right] {n};
\draw[color=blue, ->] (0.95,0.5)--(0.2,0.3) node[above] {n};
    \end{tikzpicture}
    \caption{The set $\R =J^-(\mathcal{O})\cap \T$.}\label{fig1}
    \end{figure}

    In order to show existence of weak solutions and uniqueness of strong solutions for the Dirac Cauchy problem~\eqref{CauchyDir2}, we first shall derive  so-called \textit{``energy inequalities''}. These estimates have a clear physical consequence as we shall see in Proposition~\ref{prop:finite}: Any solution can propagate with at most speed of light. 

 \subsection{Energy inequalities}\label{sec:en}   

 We note that our energy inequality (even when considered without boundary) differs from the one considered in \cite[Thm. 5.3]{Ba}, see also Remark~\ref{rem:pos}. Such an energy inequality therein should also be obtainable in our setting, but since our approach to well-posedness goes over positive symmetric hyperbolic systems the following inequality serves our purpose.

\begin{lemma}\label{lemma: energy estimates} Let $\mathcal{O}$ be a compact subset of $\Sigma_T$ and choose $\lambda$  such that $\oK_{\lambda}$ is a symmetric positive hyperbolic system on $\R$ as above. Then there exists a constant $c>0$ such that for all $\Psi\in \Gamma(S\T)$ satisfying $\Psi|_{\Sigma_0}=0$ and $\oM\Psi|_{\bM}=0$ the \emph{energy inequality} 
\begin{equation}\label{Energy Inequality}
\|\Psi\|_{L^2(\R)}  \leq c \| \oK_{\lambda} \Psi\|_{L^2(\R)}
\end{equation}
holds. 
\end{lemma}

\begin{proof}
We first derive the Green formula for $\oK_\lambda$. In a local orthonormal frame $e_\mu$ we have $\oK_\lambda-\lambda = \sum_\mu \i \gamma(e_0)\beta \i \epsilon_\mu \gamma(e_\mu)\nabla_{e_\mu}=- \sum_\mu \gamma(e_0)\beta  \epsilon_\mu \gamma(e_\mu)\nabla_{e_\mu}$.
In a point of $\R$ we obtain using $\gamma(e_0)^\dagger=\gamma(e_0)$ and $(\gamma(e_0)\gamma(e_j))^\dagger=\gamma(e_0)\gamma(e_j)$ that
\begin{align*}
\<(\oK_\lambda &- \lambda)\Psi,\Psi\> =\<- \sum_\mu\beta\gamma(e_0)\epsilon_\mu \gamma(e_\mu) \nabla_{e_\mu}\Psi,\Psi\>\\=&\<(\nabla_{e_0}-\sum_j  \gamma(e_0)\gamma(e_j)\nabla_{e_j})\Psi, \beta\Psi\>\\
 =&e_0\<\Psi, \beta\Psi \> - e_j\<\Psi, \beta\gamma(e_0)\gamma(e_j)\Psi \>+\<\Psi, -\nabla_{e_0}(\beta \Psi) +\sum_j \nabla_{e_j}(\beta\gamma(e_0)\gamma(e_j)\Psi)\>\\
 =&  \sum_\mu e_\mu\<\Psi,- \beta\epsilon_\mu\gamma(e_0)\gamma(e_\mu)\Psi \>+\<\Psi,(\oK_\lambda^\dagger-\lambda)\Psi\>
 \end{align*}
 where $\oK_{\lambda}^\dagger$  is the formal $L^2$-adjoint of $\oK_{\lambda}$. Let $\n$ be the outward normal vector to $\bR$, cp. Figure~\ref{fig1}. 
 Thus, using Stokes formula and $\n=\sum_\mu \epsilon_\mu g(\n, e_\mu)e_\mu$ we obtain 
\begin{equation}\label{Green identity}
\scalar{\Psi}{ \oK_{\lambda} \Psi }_\R - \scalar{\oK_{\lambda}^\dagger \Psi }{ \Psi }_\R   =  \scalar{\Psi}{\beta \gamma(e_0) \gamma(\n)\Psi }_{\bR}   
\end{equation}
where  $\scalar{\cdot}{\cdot}_{\bR}$ is the induced $L^2$-product on ${\bR}$. 
Subtracting $ 2 \scalar{\Psi }{ \oK_{\lambda} \Psi}_\R$, taking the real part and  using that $\oK_{\lambda}$ is a symmetric positive system we thus obtain
\begin{equation}\label{passaggio1}
\begin{aligned}
\Re \scalar{\Psi}{ \beta \gamma(e_0)  \gamma(\n)\Psi }_{\partial \R } - 2 \Re \scalar{\Psi}{ \oK_{\lambda}\Psi}_\R  &=-\scalar{\Psi}{ (\oK_{\lambda} +\oK_{\lambda}^\dagger )\Psi}_\R  \\
& \leq   - 2c\scalar{\Psi}{ \Psi}_\R \,,
\end{aligned}
\end{equation}
for some $c>0$ and where $\Re$ denotes the real part. Next, let us decompose the boundary $\bR$ as 
$$\bR=\mathcal{O} \cup \Big( \Sigma_{0}\cap J^-(\mathcal{O})\Big) \cup Y\,,$$
where $Y\define  \partial J^-(\mathcal{O})\cap \text{interior}(\mathcal{T})$ is the boundary of the light cone inside the time strip $\T$.
The boundary term on $\Sigma_{0}\cap J^-(\mathcal{O})$ vanishes by assumption on $\Psi$. Hence, we can have non zero boundary contributions only at $\mathcal{O}$ and $Y$. To deal with these terms, we decompose  the boundary $Y$ further as
$$Y= (Y\cap \bM) \sqcup \big(Y \setminus (Y\cap \bM) \big).$$
Notice that if $\bR\cap\bM =\emptyset$ then $Y$ reduces to the boundary of the light cone $J^-(\mathcal{O})$ in the interior of $\T$. Imposing the boundary condition $\oM \Psi|_\bM=0$, we have  by condition~\eqref{general bound cond} that 
$$\scalar{\Psi}{\beta  \gamma(e_0) \gamma( \n)\Psi}_\bM =0\,. $$
Therefore, the boundary term on $Y\cap \bM$ vanishes. Hence, \eqref{passaggio1} reduces to 
\[2\Re\scalar{\Psi}{ \oK_{\lambda} \Psi }_\R\! -\! 2c\scalar{ \Psi }{ \Psi }_\R \!  \geq\!  \Re\scalar{\Psi}{\!\beta \gamma(e_0)\gamma( \n)\Psi }_{\mathcal{O} } + \Re\scalar{\Psi}{\!\beta \gamma(e_0)\gamma( \n)\! \Psi }_{Y \setminus (Y\cap \bM) }.\]
Let us remark that the right hand side of the latter equation is non negative definite: Indeed, since  $\mathcal{O}$ is a spacelike hypersurface,  $\n|_{\mathcal{O}}$ is future directed timelike and by hyperbolicity of $\oK_\lambda$,  $\fiber{.}{\beta \gamma(e_0)\gamma(\n) . }_{q}=\fiber{.}{\sigma_{\oK_\lambda}(\n^\flat).}{.}_q$  is a positive inner product for all $q\in \mathcal{O}$.
By continuity, also the contribution on  $Y \setminus (Y\cap \bM)$ is still positive semidefinite since  $Y \setminus (Y\cap \bM)$ is a lightlike hypersurface. Hence, together with  Inequality~\eqref{passaggio1} we obtain
$$2c\scalar{\Psi}{ \Psi}_\R \leq  2 \Re\scalar{\Psi}{ \oK_{\lambda}\Psi}_\R.$$
Thus, for all $\Psi\in  \Gamma(S\T)$ with $\Psi|_{\Sigma_0}=0$ and $\oM\Psi|_{\partial \M}=0$ we have  using the Hölder inequality that
\[\|\Psi\|_{L^2(\R)} \leq c^{-1} \|\oK_{\lambda}\Psi \|_{L^2(\R)}.\qedhere\]
\end{proof}

We conclude this section by giving an energy inequality for the formal $L^2$-adjoint of $\oK_{\lambda}$ analogous to~\eqref{Energy Inequality}. For any compact subset $\mathcal{O}'\subset \Sigma_0$, we restrict to the set $\Rv$  defined by
\begin{equation}\label{defRv}
\Rv\define \mathcal{T}\cap J^+(\mathcal{O}')\,.
\end{equation}
Using the same arguments as in the proof of Lemma~\ref{lemma: energy estimates}, we can conclude an analog energy inequality for the adjoint. Before stating this analog, let us shortly remark on the adjoint boundary condition.
\begin{remark}\label{rem:ad}
The formal adjoint $\oK_{\lambda}^\dagger$ of $\oK_{\lambda}$ is by definition given by
 \[ \scalar{\Phi}{\oK_{\lambda} \Psi}_{S\M}=\scalar{\oK_{\lambda}^\dagger \Phi}{\Psi}_ {S\M}
 \]
 for all $\Phi\in \Gamma_c(S\M)$ and $\Psi\in \Gamma_{c}(S\M)$. Let $\oK_{\lambda}^*$ be the adjoint of $\oK_{\lambda}$ where the domain of $\oK_{\lambda}$ is given by $\text{dom}\, \oK_{\lambda}\define \{ \Phi\in \Gamma_{c}(S\T)\ |\ \oM \Phi|_{\partial \M}=0, \Phi|_{\Sigma_0}=0\}$. By the Green identity~\eqref{Green identity} we have
 \begin{align*} \text{dom}\, \oK_{\lambda}^*= \Big\{\Phi\in \text{dom}\, (\oK_{\lambda}^\dagger)_{\text{max}}\, \Big|\, \int_{(\partial \M \cap \T) \cup \Sigma_T}\! \fiber{\Phi}{\beta\gamma(e_0)\gamma(\n)\Psi}=0\  \forall \Psi\in \text{dom} \oK_{\lambda}\Big\}
 \end{align*}
 where $\text{dom}\, (\oK_{\lambda}^\dagger)_{\text{max}}$ is the maximal domain of $\oK_{\lambda}^\dagger$.
 Note that 
 $$\left\{\Phi\in \Gamma_c(S\T)\ \Big|\ \Phi|_{\Sigma_T}=0,\ \int_{\partial \M \cap \T} \fiber{\Phi}{\beta\gamma(e_0)\gamma(\n)\Psi}=0\  \forall \Psi\in \text{dom} \oK_{\lambda}\right\}$$ is a core of $ \oK_{\lambda}^*$.  We set $(1-\oM^\dagger)\colon S_q (\T\cap \partial \M)\to S_q (\T\cap \partial \M)$ to be the orthogonal projection to 
$$\{\Phi\in S_q(\T\cap \partial \M)\ |\ \fiber{\Phi}{\gamma(e_0)\gamma(\n)\Psi}_q=0\ \forall \Psi\in  S_q(\T\cap \partial \M) \text{ with } \oM\Psi=0\}\,.$$
 We call $\oM^\dagger$ the \emph{adjoint boundary condition}, compare \cite[Sect.~7.2]{BB} for the analog notations in the elliptic case.  Note that as a matrix $M^\dagger$ is \emph{not necessarily} the hermitian adjoint of $M$ ($M$ can be multiplied by any invertible complex operator to obtain the same boundary condition.). 
Using this notation, we see that  
\begin{align*}\{\Phi\in \Gamma_c(S\T)\ |\ \Phi|_{\Sigma_T}=0,\ \oM^\dagger \Phi=0 \}\end{align*} is a core of $\oK_{\lambda}^*$.\medskip 

Note that for $\oM=\gamma(\n)-\imath$ being the MIT boundary condition then $\oM^\dagger=\oM$ --- see also Remark~\ref{rem:bd2}.
\end{remark}

\begin{lemma}\label{lemma: energy estimates2}
Let $\Phi\in \Gamma(S\T)$ satisfying $\Phi|_{\Sigma_T}=0$ and $\oM^\dagger\Phi|_{\bM}=0$. Then $\Phi$ satisfies the inequality
\begin{equation}\label{Energy Inequality2}
\|\Phi\|_{L^2(\mathcal{\Rv})}  \leq C \| \oK_{\lambda}^\dagger \Phi\|_{L^2(\mathcal{\Rv})}\,.
\end{equation}
\end{lemma}

\subsection{Uniqueness and finite propagation speed}\label{sec:fps}

We are now ready to see by standard arguments that if there exists a smooth solution to the Cauchy problem~\eqref{CauchyK}, then it propagates with at most speed of light.

\begin{proposition}[Finite speed of propagation]\label{prop:finite} 
Any smooth solution to the Dirac Cauchy problem~\eqref{CauchyDir} propagates with at most the speed of light, i.e., its support  is inside the region 
$$ \mathcal{V}\define \Big( J^+\big(\supp\, f \big)\cup J^+(\supp h)\Big),$$ 
see Figure~\ref{fig:finite speed}.
\end{proposition}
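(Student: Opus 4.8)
The plan is to prove finite speed of propagation as a corollary of the energy inequality from Lemma~\ref{lemma: energy estimates}, using the standard ``domain of dependence'' argument adapted to the boundary case. Suppose $\psi$ solves the Dirac Cauchy problem \eqref{CauchyDir}; by Lemma~\ref{Equivalence Dir and K}, equivalently $\Psi = e^{-\lambda t}\psi$ solves the Cauchy problem \eqref{CauchyK} for $\oK$ on the relevant compact region. Pick any point $p \in \T \setminus \V$. The goal is to show $\psi$ vanishes near $p$. First I would choose a small compact set $\mathcal{O} \subset \Sigma_T$ (or more generally an appropriate spacelike surface through the time level of $p$) such that $p$ lies in the interior of $\R = J^-(\mathcal{O}) \cap \T$ but such that $\R$ is \emph{disjoint} from $\supp f$ and from $\supp h \subset \Sigma_0$; this is possible precisely because $p \notin \V$, so the past light cone of a sufficiently small neighbourhood of the appropriate point avoids both supports (one uses here that $\supp f \cap \T$ and $\supp h$ are closed, that $p \notin J^+(\supp f \cap \T) \cup J^+(\supp h)$, and continuity of the causal structure).

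On such a region $\R$, the restriction of $\Psi$ satisfies $\oK\Psi = \f = e^{-\lambda t}\gamma(e_0) f = 0$ on $\R$ since $\R \cap \supp f = \emptyset$; moreover $\Psi|_{\Sigma_0 \cap \R} = e^{-\lambda t}h|_{\Sigma_0 \cap \R} = 0$ since $\R \cap \supp h = \emptyset$; and the MIT boundary condition $\oM\Psi|_{\bM} = 0$ holds on all of $\bM$, in particular on $\bM \cap \R$. Thus $\Psi$ satisfies exactly the hypotheses of Lemma~\ref{lemma: energy estimates} on $\R$: it vanishes on $\Sigma_0$ and satisfies the boundary condition. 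The energy inequality \eqref{Energy Inequality} then gives
\[
\|\Psi\|_{L^2(\R)} \leq c\,\|\oK\Psi\|_{L^2(\R)} = 0,
\]
so $\Psi \equiv 0$ on $\R$, hence $\psi = e^{\lambda t}\Psi \equiv 0$ on $\R$, and in particular $\psi$ vanishes in a neighbourhood of $p$. Since $p \in \T \setminus \V$ was arbitrary, $\supp\psi \cap \T \subset \V$, which is the claim.

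The main obstacle — and the point that needs care rather than the formal argument above — is the geometric bookkeeping: verifying that for each $p \notin \V$ one can genuinely find a compact $\mathcal{O}$ in the top Cauchy slice whose past cone $\R = J^-(\mathcal{O}) \cap \T$ contains $p$ in its interior yet stays clear of $\supp f \cap \T$ and $\supp h$, and simultaneously is of the form required for $\oK$ to be a symmetric positive hyperbolic system there (Lemma~\ref{Equivalence Dir and K} handles compactness). One has to use global hyperbolicity of $\M$ with timelike boundary — that $J^{\pm}$ of compact sets intersected with a time strip behave well (closedness, the causal curves staying inside $\M$ including possibly running along $\bM$) — and the fact that the decomposition $\bR = \mathcal{O} \cup (\Sigma_0 \cap J^-(\mathcal{O})) \cup Y$ used in Lemma~\ref{lemma: energy estimates} is valid for this $\R$. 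A secondary subtlety is that \eqref{CauchyDir} is posed with data in $\Gamma_{cc}$ while Lemma~\ref{lemma: energy estimates} is stated for smooth $\Psi$ on $\Gamma(S\T)$; since we are assuming a solution $\psi$ is given and smooth, restricting it to $\R$ is unproblematic, but one should remark that the energy inequality is being applied to this given solution and not used to produce one. Everything else is routine.
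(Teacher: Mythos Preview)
Your proposal is correct and follows essentially the same route as the paper's proof: pick $p\notin\V$, pass to $\Psi=e^{-\lambda t}\psi$ via Lemma~\ref{Equivalence Dir and K}, observe that on the relevant past cone the data and the right-hand side vanish, and apply the energy inequality~\eqref{Energy Inequality} to conclude $\Psi\equiv0$ there. The only difference is cosmetic: the paper takes $\R=J^-(p)\cap\T$ directly (your parenthetical alternative), which spares you the geometric bookkeeping of producing an $\mathcal{O}\subset\Sigma_T$ whose past cone still avoids the supports---so the ``main obstacle'' you flag largely evaporates if you adopt that choice.
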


\begin{proof} For any time strip $\T$, consider any point $p$ outside the region $\mathcal{V}\cap \T$.
\begin{figure}
\begin{tikzpicture}
\begin{scope}       
    \clip {(9,1.75) .. controls (6,0.75) and (2,2.75).. (0,1.75) -- (0.8,2)  .. controls (1.2,1.75)  and (0.75,0.9) .. (1.0,0.25)  -- (1.0,0.25) .. controls (2,1) and (6,-1) .. (9,0) };    
    \fill[yellow!30] { plot[smooth, tension=1] coordinates { (4.5,0.8) (5,0.58) (6,0.25) (6.6,1) (6,2) (4.5, 1.5)   (4.5,0.8) }};
;
                \node  at (5.5,1) {$\supp \f$};
\end{scope}

\begin{scope}    
    \clip {(9,1.75) .. controls (6,0.75) and (2,2.75).. (0.8,2) -- (0.8,2)  .. controls (1.2,1.75)  and (0.75,0.9) .. (1.0,0.25)  -- (1.0,0.25) .. controls (2,1) and (6,-1) .. (9,0) };     
    \fill[yellow!30] { (2,0.4) -- (0.97,1.45) -- (0.8,2) -- (0.8,2) .. controls (1,3).. (7.4,1.47)  -- (6.38,0.4) --  (4.7,0.7)  -- (3.99,0.1) -- (2,0.27) };
;
                \node  at (5.5,1) {$\supp \f$};
\end{scope}

\begin{scope}        
    \clip {(0,0) .. controls (2,1) and (6,-1) .. (9,0) -- (9,1.75) -- (9,1.75) .. controls (6,0.75) and (2,2.75).. (0,1.75) -- (0,0)};    
    \fill[blue!30] { (7.8,1.15) -- (7.1,-0.3)  .. controls ( 7.5, -.5) .. (8.7,-0.1) --(7.8,1.15)  };
;
                \node  at (5.5,1) {$\supp \f$};
\end{scope}

 \draw[]  (1.0,0.25) .. controls (2,1) and (6,-1) .. (9,0) ; 
 \draw[]  (0.8,2) .. controls (2,2.75) and (6,0.75) .. (9,1.75); 
  \draw[]  (1.0,0.25) .. controls (0.75,0.9) and (1.2,1.75) .. (0.8,2); 
    \draw[dashed]  (0.9,-0.5) .. controls (1.1,-0.15) .. (1.0,0.25); 
     \draw[dashed]  (0.8,2) .. controls   (0.5,2.35) .. (0.6,2.75); 

  \draw[decorate, decoration=snake, segment length=5,very thick] (2,0.38)  -- ( 4,0.1)  node[midway,below,rotate=-5] {$\,\,\supp \h$}  ; 

            \draw[]  plot[smooth, tension=1] coordinates { (4.5,0.8) (5,0.58) (6,0.25) (6.6,1) (6,2) (4.5, 1.5)   (4.5,0.8) };

\draw[red!80] (2,0.4) -- (0.97,1.45);
\draw[red!80] (3.99,0.1) -- (4.7,0.7);
\draw[red!80] (6.38,0.4) -- (7.4,1.47);

\draw[blue!80] (7.8,1.15) -- (7.1,-0.3);
\draw[blue!80] (7.8,1.15) -- (8.7,-0.1);

            \node  at (8,0.85) [label={$p$}] {};	
            \node  at (7.8,1.15) {$\mathsmaller{\mathsmaller{\bullet}}$};	                                    
            \node  at (3.5,0.35) [label={$q$}] { };
            \node  at (3.3,0.65) {$\mathsmaller{\mathsmaller{\bullet}}$};
            \node  at (2.25,1) [label={${\mathcal{V}\cap \T}$}] { };
            \node  at (7.85,-0.1) [label={$\R$}] { };
            \node  at (9.5,-0.45) [label={$\Sigma_0$}] { };
            \node  at (9.5,1.35) [label={$\Sigma_T$}] { };

    \end{tikzpicture}
    \caption{Finite propagation of speed -- $\mathcal{V}\cap \T$.}
    \label{fig:finite speed}
\end{figure}
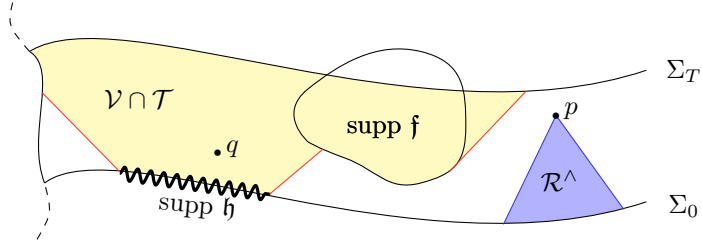
Then there exists a $\lambda$ such that $\oK_{\lambda}$ is a symmetric  positive hyperbolic system on $\R=\mathcal{T}\cap J^-(p)$.
Let $\Psi=e^{-\lambda t}\psi$. Then, by Lemma~\ref{Equivalence Dir and K} $\Psi$ is a solution to \eqref{CauchyK} with $\f=\imath e^{-\lambda t} \beta \gamma(e_0)f$ and $\h= e^{-\lambda t}h$. 
By Lemma~\ref{lemma: energy estimates} and $\f|_\R\equiv 0$, $\h|_{\R\cap \Sigma_0}\equiv 0$, $\Psi$ vanishes in $\R$. Hence, $\Psi$ vanishes outside $\mathcal{V}$.

The finite propagation of speed for  a smooth solution of the Dirac Cauchy problem~\eqref{CauchyDir2} then follows by Lemma~\ref{Equivalence Dir and K}.
\end{proof}

\begin{proposition}[Uniqueness]\label{prop:unique}
Suppose there exist $\Psi,\Phi \in \Gamma(S\T)$ satisfying the same Cauchy problem~\eqref{CauchyK} for some $\f\in \Gamma_{cc}(S\M)$ and $\h\in \Gamma_{cc}(S\Sigma_0)$. Then $\Psi=\Phi$.
In particular, this also gives uniqueness of smooth solutions for the Dirac Cauchy problem~\eqref{CauchyDir2}.
\end{proposition}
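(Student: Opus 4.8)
The plan is to reduce the uniqueness of solutions to the energy inequality of Lemma~\ref{lemma: energy estimates} via linearity. Suppose $\Psi$ and $\Phi$ both solve the Cauchy problem~\eqref{CauchyK} for the same data $(\f,\h)$, and set $\Theta\define\Psi-\Phi\in\Gamma(S\T)$. By linearity of $\oK$ and of the boundary map $\oM$, the difference $\Theta$ solves the homogeneous problem: $\oK\Theta=0$, $\Theta|_{\Sigma_0}=0$, and $\oM\Theta|_{\bM}=0$. So it suffices to prove that the only such $\Theta$ is the zero section.

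To show $\Theta\equiv 0$, I would argue locally, exactly as in the proof of Proposition~\ref{prop:finite}. Fix an arbitrary point $p\in\T$; I want to show $\Theta$ vanishes near $p$. Choose $\lambda$ large enough that $\oK$ is a symmetric positive hyperbolic system on $\R\define\T\cap J^-(p)$, which is possible by Lemma~\ref{Equivalence Dir and K} since $\R$ is compact. Here a subtle point is that the change of $\lambda$ transforms the unknown: if the original $\oK$ uses some fixed $\lambda_0$ and the new one uses $\lambda$, then $\Psi$ and $\Phi$ correspond via $\Psi\mapsto e^{-(\lambda-\lambda_0)t}\Psi$ etc.\ to solutions of the $\lambda$-problem, and since this rescaling is linear and nonvanishing, the difference still solves the homogeneous $\lambda$-problem (with zero data, zero source, and the same boundary condition, because $e^{-(\lambda-\lambda_0)t}\neq 0$ cancels in $\oM\Theta|_{\bM}=0$). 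Now apply Lemma~\ref{lemma: energy estimates} on this $\R$: since $\Theta|_{\Sigma_0}=0$ and $\oM\Theta|_{\bM}=0$, we get
\[
\|\Theta\|_{L^2(\R)}\leq c\,\|\oK\Theta\|_{L^2(\R)}=0,
\]
hence $\Theta$ vanishes on $\R$, in particular at $p$. Since $p\in\T$ was arbitrary, $\Theta\equiv 0$ on $\T$, i.e.\ $\Psi=\Phi$.

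For the final sentence of the statement, uniqueness for the Dirac Cauchy problem~\eqref{CauchyDir2} follows immediately from the equivalence in Lemma~\ref{Equivalence Dir and K}: any two smooth solutions $\psi_1,\psi_2$ of~\eqref{CauchyDir2} give rise to solutions $\Psi_i=e^{-\lambda t}\psi_i$ of~\eqref{CauchyK} with the same data, so $\Psi_1=\Psi_2$ by the above, and multiplying back by $e^{\lambda t}$ yields $\psi_1=\psi_2$.

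I do not expect a genuine obstacle here: the real content is packed into the energy inequality of Lemma~\ref{lemma: energy estimates}, and uniqueness is a routine consequence via linearity plus the local past-cone argument. The only thing one must be careful about is the bookkeeping of the parameter $\lambda$ — ensuring that passing to a $\lambda$ that makes $\oK$ positive on $\R$ does not spoil the homogeneity of the problem satisfied by the difference — but as noted the exponential factor is linear and nowhere zero, so this causes no trouble.
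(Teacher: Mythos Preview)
Your argument is correct and follows essentially the same idea as the paper: reduce to the homogeneous problem and kill the difference with the energy inequality of Lemma~\ref{lemma: energy estimates}. The only organizational difference is that the paper first invokes Proposition~\ref{prop:finite} to confine the supports of $\Psi$ and $\Phi$ to a single compact $\R$ (with $\mathcal{O}=\mathcal{V}\cap\Sigma_T$) and then applies the energy inequality once, whereas you bypass that step and run the past-cone argument point by point, exactly as in the proof of Proposition~\ref{prop:finite} itself; your extra care with the $\lambda$-bookkeeping is a welcome clarification of a point the paper leaves implicit.
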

\begin{proof}
Since $\Psi$ and $\Phi$ satisfy the same initial-boundary value problem~\eqref{CauchyK}, then $\Psi-\Phi\in \Gamma(S\T)$ is a solution of \eqref{CauchyK} with $\f=0$ and $\h=0$.
By Proposition~\ref{prop:finite}, the supports of $\Psi$ and $\Phi$ are contained in $\R$ for $\mathcal{O}\define \mathcal{V}\cap \Sigma_T$. Therefore, we can use Lemma~\ref{lemma: energy estimates} to conclude that $\Psi-\Phi$ is zero.\medskip

The uniqueness of the Dirac Cauchy problem~\eqref{CauchyDir2} then follows by Lemma~\ref{Equivalence Dir and K}.
\end{proof}

\subsection{Weak solutions in a time strip - definition}
\label{sec:ws}

In the following we will focus on the Cauchy problem  \eqref{CauchyK} for  $\h\equiv 0$. This can be done without loss of generality, since:

\begin{remark}
For any Cauchy problem with any nonzero initial data $\mathfrak{h}\in \Gamma_{cc}(S\Sigma_0)$ there exists an equivalent Cauchy problem with zero initial data, namely 
\begin{equation*}
\begin{cases}{}
\oK_{\lambda} \Psi=\f  \\
\Psi|_{\Sigma_0} = \mathfrak{h} \\
\oM\Psi|_{\bM} =0
\end{cases} \, \Longleftrightarrow \quad \begin{cases}{}
\oK_{\lambda} \tilde\Psi=\tilde{\mathfrak{f}} \\
\tilde\Psi|_{\Sigma_0} = 0  \\
\oM \tilde\Psi|_{\bM} =0
\end{cases} 
\end{equation*}
{for any $\f,\tilde\f\in\Gamma_{c}(S\M)$}. Here $\tilde{\mathfrak{f}}(t,{x})\define \f(t,{x}) - \lambda\mathfrak{h}({x}) - \imath\beta\gamma(e_0) \Dir \mathfrak{h}$ and $\tilde{\Psi}(t,{x})\define \Psi(t,{x})-\mathfrak{h}({x})$. Note that $\f\in\Gamma_{cc}(S\M)$ if and only if $f\in\Gamma_{cc}(S\M)$.
\end{remark}

With the help of the Energy inequality~\eqref{Energy Inequality} we shall prove the existence of a \textit{weak solution} for the mixed initial-boundary value problem~\eqref{CauchyK} for $\oK_{\lambda}$ as in~\eqref{eq:K} (for fixed  $\f\in\Gamma_{cc}({S\M})$ and $\h\equiv 0$).
To this end, let $\lambda$ be such that $\oK_{\lambda}^\dagger$ is a symmetric positive hyperbolic system on $\Rv=\T\cap J^+(\mathcal{O}')$ for 
\begin{equation}\label{def:V}
\mathcal{O}'\define J^-(\V\cap \Sigma_T) \cap \Sigma_0 \text{ with }\mathcal{V}\define J^+\big(\supp\,\f \big) \cap \mathcal{T}\,.
\end{equation}
We denote by  
\[\mathscr{H} \define \overline{\big({\Gamma_c(S\T)},{\scalar{.}{.}}_{\T} \big)}^{\scalar{.}{.}_{\T}} \]
 the $L^2$-completion of  $\Gamma_c(S\T)$.

\begin{definition}\label{def:Weak existence}
We call $\Psi\in\cH$ a \emph{weak solution} to the Cauchy problem~\eqref{CauchyK} restricted to an open subset $\mathcal{U}$ of $\T$ with $\f\in \Gamma_{cc}(S\M)$ and $\h\equiv 0$ if the relation
\begin{equation}\label{scalprod}
\scalar{\Phi}{\f}_{\mathcal{U}}=\scalar{\oK_{\lambda}^\dagger \Phi}{\Psi}_{\mathcal{U}}
\end{equation}
holds for all  $\Phi \in \Gamma_{c}(S\mathcal{U})$ satisfying $\oM^\dagger\Phi|_{\bM }=0$  and $\Phi|_{\Sigma_T}\equiv 0$.
\end{definition}
In order to check that this is the right definition let us give the following remark.
\begin{remark}
 Let $\Psi$ be a weak solution as defined above that is even smooth. Then by testing first only with $\Phi\in \Gamma_{cc}(S\mathcal{U})$ we immediately obtain with the Green identity that $\oK_{\lambda} \Psi=\f$. Using this in \eqref{scalprod} with the Green identity for general $\Phi\in \Gamma_{c}(S\mathcal{U})$ gives 
 \[ 0= \scalar{\Phi}{\beta \gamma(e_0)\gamma(\n)\Psi}_{(\text{interior}(\T)\cap \partial\M) \cup \Sigma_0\cup \Sigma_T}. \]
 The part on $\Sigma_T$ vanishes since $\Phi_{\Sigma_T}\equiv 0$. Moreover, using test functions $\Phi$ that have support near $\Sigma_0$ resp. $\partial \M\cap \text{interior}(\T)$ one sees as in Remark~\ref{rem:ad}  that $\Psi_{\Sigma_0}=0$ resp. $\oM \Psi|_{\partial \M}=0$.
\end{remark}

\begin{remark}
 Analogously, one sees that a spinor $\Psi\in\cH$ is a weak solution to \eqref{CauchyK} restricted to an open subset $\mathcal{U}$ of $\T$ with $\h\in \Gamma_{cc}(S\Sigma_0)$  if the relation
\begin{equation*}
\scalar{\Phi}{\f}_{\mathcal{U}}=\scalar{\oK_{\lambda}^\dagger \Phi}{\Psi}_{\mathcal{U}} +\scalar{\Phi}{\beta \gamma(e_0)\gamma(\n)\h}_{\Sigma_0\cap \mathcal{U}}
\end{equation*}
holds for all  $\Phi \in \Gamma_{c}(S\mathcal{U})$ satisfying $\oM^\dagger\Phi|_{\bM }=0$  and $\Phi|_{\Sigma_T}\equiv 0$.
\end{remark}

\subsection{Differentiability of  weak solutions}\label{diffsol}

Before examining existence of a weak solution, we want to show that a weak solution, if it exists,  is a strong solution and, in particular, smooth. Since this is a local question, we use the theory for hyperbolic systems on subsets of $\mathbb R^{n+1}$---in particular  that a weak solution is a (semi-)strong solution, \cite[Section 1]{LP},  and the regularity estimates for strong solutions in \cite[Theorem~3.1]{RM}. Since the definition of strong solution in the sense Lax--Phillips slightly differs with the one given by Rauch--Massey, we will denote it by semi-strong in the following:

\begin{definition}\label{def:strong sol}
Let $U\subset\M$ be a compact subset in $\M$.
We say that $\Psi\in \mathscr{H}$ is a \emph{semi-strong solution} of the initial-boundary value problem~\eqref{CauchyK} if there exists a sequence of sections $\Psi_k \in W^{1,2}(\Gamma(SU))$ such that $\oM\Psi_k=0$ on $\partial\M \cap U$, $\Psi_k=0$ on $\Sigma_0$ and  
$$ \| \Psi_k - \Psi \|_{L^2(U)} \xrightarrow{k\to \infty} 0 \quad \text{ and } \quad \| \oK_{\lambda}\Psi_k - \f \|_{L^2(U)} \xrightarrow{k\to \infty} 0.$$ The solution is called \emph{strong} if additionally the sequence $\Psi_k$ can chosen to be smooth.
 \end{definition}

 We concentrate on points in the boundary $p\in  \partial \M$ (the other points will even be easier since we do not have to care about boundaries) and firstly define a convenient chart as follows, compare also Figure~\ref{fig:chart}:
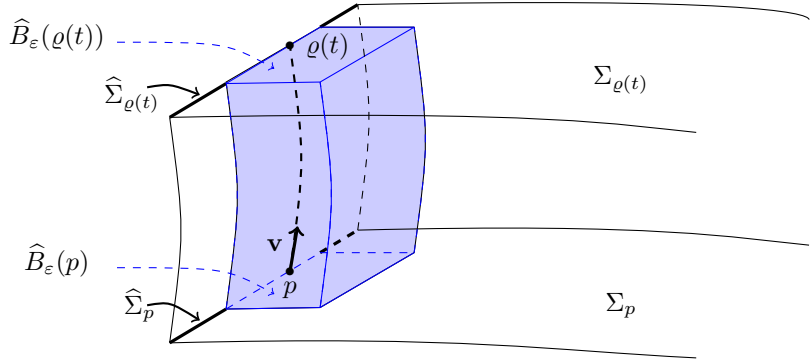
\begin{figure}
\begin{tikzpicture}

\filldraw[fill=blue!20!white] (0.75,0.45) --(2,0.47) -- (3.25,1.2) -- (3.25,1.2) .. controls (3.45,2.7) .. (3.25,4.2) -- (3.25,4.2) --(2,4.2) -- (0.75,3.47) .. controls (0.95,1.95) .. (0.75,0.45) ;

 \draw (0,0) .. controls (4,0.1) and  (5,-0.1) .. (7,-0.2) ; 
 \draw (2.5,1.5) .. controls  (6.5,1.6) and  (7.5,1.4) .. (8.5,1.3) ; 
\draw[very thick] (0,0) -- (0.75,0.45);
\draw[dashed, very thick] (2,1.2) -- (2.5,1.5);
\draw[blue,dashed ] (0.75,0.45) -- (2,1.2) ;
\draw[blue, ] (0.75,0.45) -- (2,0.47) ;
\draw[blue, dashed] (2,1.2) -- (3.25,1.2) ;
\draw[blue, ] (2,0.47) -- (3.25,1.2) ;

 \draw (0,3) .. controls  (5,3.1) and  (6,2.9)  .. (7,2.8) ;  
\draw (2.5,4.5)..controls (7.5,4.6) and (8.5,4.4) .. (8.5,4.3) ;  
\draw[very thick] (0,3) -- (0.75,3.45);
\draw[very thick] (2,4.2) -- (2.5,4.5);
\draw[blue,] (0.75,3.45) -- (2,4.2) ;
\draw[blue, ] (0.75,3.45) -- (2,3.47) ;
\draw[blue, ] (2,4.2) -- (3.25,4.2) ;
\draw[blue, ] (2,3.47) -- (3.25,4.2) ;

 \draw  (0,0) .. controls (0.2,1.5) .. (0,3); 
\draw[dashed]   (2.5,1.5) .. controls (2.7,3) .. (2.5,4.5); 

\draw[dashed,blue,]  (0.75,0.45) .. controls (0.95,1.95) .. (0.75,3.47); 
\draw[dashed,blue,]  (2,0.47) .. controls (2.2,1.97) .. (2,3.47); 
\draw[blue,]  (2,0.47) .. controls (2.2,1.97) .. (2,3.47); 
\draw[dashed,blue,]  (3.25,1.2) .. controls (3.45,2.7) .. (3.25,4.2); 

            \node  at (1.6,0.95) {$\mathsmaller{{\bullet}}$};	                                    
            \node  at (1.6,0.70) {$p$};	       
            \node  at (1.6,3.95) {$\mathsmaller{{\bullet}}$};	                                    
            \node  at (2.1,3.95) {$\varrho(t)$};	                                    
\draw[dashed,black, thick]  (1.6,0.95)  .. controls (1.8,2.45) .. (1.6,3.95); 
\draw[black, very thick,->]  (1.6,0.95)  -- (1.7,1.55); 
            \node  at (1.4,1.3) {$\bf v$};	       

            \node  at (6,0.5) {$\Sigma_p$};	       
            \node  at (6,3.5) {$\Sigma_{\varrho(t)}$};

    \draw[dashed,blue, ,->]  (-0.7,1) .. controls (0.75,1) .. (1.39,0.65) ; 
     \node  at (-1.5,1.1) {$\widehat{B}_\epsilon(p)$};	
   
    \draw[dashed,blue, ,->]  (-0.7,4) .. controls (0.75,4) .. (1.39,3.65) ; 
     \node  at (-1.5,4.1) {$\widehat{B}_\epsilon(\varrho(t))$};	
                   
            \node  at (-0.4,0.5) {$\widehat{\Sigma}_p$};	       
    \draw[black,thick ,->]  (-0.3,0.5)  .. controls (0,0.55) .. (0.4,0.3) ; 

            \node  at (-0.5,3.3) {$\widehat{\Sigma}_{\varrho(t)}$};	       
   \draw[black, thick,->]  (-0.3,3.4)  .. controls (0,3.45) .. (0.4,3.3) ; 

    \end{tikzpicture}
    \caption{Fermi coordinates on each Cauchy surface.}
    \label{fig:chart}
\end{figure} 
 Let ${\Sigma}_p$ be the Cauchy surface of $\M$ to which $p$ belongs to. For $q\in \partial \M$ let $\widehat{\Sigma}_q \define \Sigma_q\cap \partial \M$ be the corresponding Cauchy surface in the boundary. Let $\varrho\colon [0,\epsilon]\to \partial \M$ be the timelike geodesic in $\partial \M$ starting at $p$ with velocity ${\bf v} \in T_p\bM$ where ${\bf v}$ is a normalized, future-directed, timelike vector perpendicular to $\widehat{\Sigma}_p$ in $\partial \M$.  Let $\widehat{B}_\epsilon (\varrho(t))$ be the $\epsilon$-ball in $\hat{\Sigma}_{\varrho(t)}$ around $\varrho(t)$. On these balls we choose geodesic normal coordinates $\widehat{\kappa}_t\colon B_\epsilon^{n-1}(0)\subset \mathbb R^{n-1}\to \widehat{B}_\epsilon (\varrho(t))$. Moreover, inside each $\Sigma_{\varrho(t)}$ we choose Fermi coordinates with base $\widehat{B}_\epsilon (\varrho(t))$.
  Thus, we obtain a chart in $\Sigma_{\varrho(t)}$ around $\varrho(t)$ as \label{def:U_p}  
\begin{align*} 
\widetilde\kappa_t\colon B_\epsilon^{n-1}(0)\times [0,\epsilon]\subset \mathbb R^{n}&\to U_{\epsilon} (\widehat{B}_\epsilon (\varrho(t)))\define \{ q\in \Sigma_{\varrho(t)}\ |\ \mathrm{dist}_{\Sigma_{\varrho(t)}}(q, \widehat{B}_\epsilon (\varrho(t))\leq \epsilon\}
\\
 (y,z) & \mapsto \exp^{\perp, \Sigma_{\varrho(t)}}_{\widehat{\kappa}_t(y)}(z)
\end{align*}
where $\exp^{\perp, \Sigma_{\varrho(t)}}_{\widehat{\kappa}_t(y)}(z)$ is the normal exponential map in $\Sigma_{\varrho(t)}$ starting at $\widehat{\kappa}_t(y)$ with velocity perpendicular to $\widehat{\Sigma}_{\varrho(t)}=\partial \Sigma_{\varrho(t)}$ pointing in the interior and with magnitude $z$. Putting all this together we obtain a chart
  \begin{align*}
   \kappa_{p} \colon [0,\epsilon] \times B_\epsilon^{n-1}(0)\times [0,\epsilon] \subset \mathbb R^{n+1}&\to U_p\define \bigcup_{t\in [0,\epsilon]} U_\epsilon (\widehat{B}_\epsilon (\varrho(t))) \subset \M\\
   (t,y,\bar{z})&\mapsto \widetilde\kappa_t(y,\bar{z}).
  \end{align*}
    Note that sections of the spinor bundle $SU_p$ are now just vector-valued functions $U_p\to \mathbb C^N$ where $N$ is the rank of the spinor bundle.\medskip
    
     For more details on the above geometric maps compare e.g. \cite{GS}.     For us here, the only purpose of those charts is to specify coordinates such that near the point $p$ the Cauchy problem is near enough to the Minkowski standard form and  will take the form as in \cite{LP, RM}. To see this, let us  first consider the model case of a `general half' of the Minkowski space.
 
 \begin{example}\label{ex:mink}
  Let $\overline{\mathscr{M}}$ be the Minkowski space with coordinates  $x=(x^0, \ldots, x^n)$ with the standard foliation of Cauchy surfaces. We set $t\define x^0$, $y=(x^1,\ldots, x^{n-1})$ and $z\define x^n$. For $|a|<1$, the hypersurface $\N_a\define \{z=at\}$ is timelike and $\mathscr M_a=\{ z\geq at\}\subset \overline{\mathscr{M}}$ is a globally hyperbolic manifold with a timelike boundary. We use $(\overline{t}\define t,y,\overline{z}\define z-at)$ as new coordinates on $\mathscr M_a$. Then, $(\overline{t},y,\overline{z})\mapsto (\overline{t}, y, \overline{z}+a\overline{t})$ is exactly the map $\kappa_p$ from above for any $p\in \mathcal{N}_a$.\medskip
  
  Then, together with $\gamma(e_0)=\gamma(e_0)^{-1}$, we have \begin{align*}
  \imath\gamma(e_0)\Dir &= \partial_t -\sum_{j=1}^{n-1} \gamma (e_0)\gamma (e_j)\partial_{x^j}-\gamma ({e_0})\gamma ({e_n})\partial_{z}\\
  &= \partial_{\overline{t}} -  \sum_{j=1}^{n-1} \gamma ({e_0})\gamma ({e_j})\partial_{x^j}-(\gamma ({e_0})\gamma ({e_n})+a){\partial_{\overline{z}}}. 
  \end{align*}

  Thus,  $\oK_{\lambda}=\imath\gamma(e_0)\Dir+\lambda \mathrm{Id}$, as in Example~\ref{example2}, has the form
  \begin{equation}\label{eq:Pform}
   \oK_{\lambda}= \partial_{\overline{t}} + \sum_{j=1}^{n-1} A_j(x)\partial_{x^j} + A_{\overline{z}}(x) \partial_{\overline{z}} + B(x).
  \end{equation}
  with $A_{\overline{z}}(x)=-\gamma ({e_0})\gamma ({e_n})-a$.   Since $|a|<1$, $A_{\overline{z}}(x)$  is nonsingular on $\partial \mathscr M_a$. Hence, since $\ker\oM|_q$ varies smoothly with $q\in\partial \mathscr M_a$, after restricting to some cube in $\mathscr M_a$ we are exactly in the situation considered in \cite{LP, RM}.
 \end{example}

 \begin{corollary}\label{cor_Up}
  For the Dirac operator $\Dir$ on a globally hyperbolic spin manifold with timelike boundary $\bM$ and $p\in \partial \M$, there is a sufficiently small $\epsilon >0$ such that in the coordinates $\kappa_p$ from above there is an invertible operator $\oE\colon \Gamma(U_p, \mathbb C^N)\to \Gamma(U_p, \mathbb C^N)$ such that $\oE\oK_{\lambda}$ has the form \[\partial_{\overline{t}} + \sum_{j=1}^{n-1} A_j(\bar{t}, y, \bar{z})\partial_{x^j} + A_{\overline{z}}(\bar{t}, y, \bar{z}) \partial_{\overline{z}} + B(\bar{t}, y, \bar{z})\] with $A_{\bar{z}}$ nonsingular on the boundary. In particular, any weak solution of the Cauchy problem~\eqref{CauchyK} gives rise to a weak solution to the Cauchy problem
    \begin{equation}\label{CauchyK_2}
\left\{\begin{matrix}
\widehat{\oK_{\lambda}}\define \oE\oK_{\lambda} \Psi&=\f \in \Gamma_{c}(SU_p) \\
\Psi|_{V_p}& = \mathfrak{h}\in \Gamma_{c}(SV_p) \\
\oM\Psi|_{\bM} & =0 \qquad\qquad\quad
\end{matrix} \right.
\end{equation}
 on $U_p$ and vice versa. Here $V_p\define U_p\cap t^{-1}(0)$ and weak solution of \eqref{CauchyK_2} is defined analogously as in Definition~\ref{def:Weak existence}.
 \end{corollary}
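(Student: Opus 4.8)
The plan is to write $\oK=-\i\gamma(e_0)\Dir+\lambda\Id$ out in the chart $\kappa_p=(t,y,\bar z)$ constructed above and then divide off the coefficient of $\partial_t$. In these coordinates $\oK$ is a first order operator of the shape $\oK=A_0\,\partial_t+\sum_{j=1}^{n-1}A_j\,\partial_{y^j}+A_{\bar z}\,\partial_{\bar z}+B$ with smooth $\mathrm{End}(\mathbb{C}^N)$-valued coefficients, and the defining property of the principal symbol gives $A_0=\sigma_{\oK}(dt)$, $A_j=\sigma_{\oK}(dy^j)$, $A_{\bar z}=\sigma_{\oK}(d\bar z)$; since $\sigma_{\Dir}(\xi)=\i\gamma(\xi^\sharp)$ one also reads off $\sigma_{\oK}(\xi)=\gamma(e_0)\gamma(\xi^\sharp)$ for every covector $\xi$.

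First I would check that $A_0$ is fibrewise invertible. By construction of $\kappa_p$ the slices $\{t=\mathrm{const}\}$ are pieces of the Cauchy surfaces $\Sigma_s$, so on $U_p$ the chart coordinate $t$ is a strictly monotone reparametrisation of the chosen Cauchy time function; hence $dt$ is a positive multiple of a timelike covector, $\gamma((dt)^\sharp)^2=-g((dt)^\sharp,(dt)^\sharp)\Id$ is an invertible multiple of the identity, and therefore $A_0=\gamma(e_0)\gamma((dt)^\sharp)$ is invertible on $U_p$ (equivalently, this is condition~(H) in Definition~\ref{def:symm syst} applied to $dt$, which holds for $\oK$ by Lemma~\ref{Equivalence Dir and K}). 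I then set $\oE\define A_0^{-1}$, a smooth fibrewise invertible endomorphism on $U_p$, so that $\widehat\oK\define\oE\oK=\partial_t+\sum_j(A_0^{-1}A_j)\,\partial_{y^j}+(A_0^{-1}A_{\bar z})\,\partial_{\bar z}+A_0^{-1}B$ has exactly the form~\eqref{eq:Pform}. Next I would verify that the boundary is non-characteristic: since $\kappa_p$ uses Fermi coordinates based on $\partial\M$ one has $\partial\M\cap U_p=\{\bar z=0\}$, so there $d\bar z$ annihilates $T\partial\M$ and $(d\bar z)^\sharp$ is a nonzero multiple of the unit normal $\n$; as $\partial\M$ is \emph{timelike}, $\n$ is spacelike, $\gamma((d\bar z)^\sharp)$ is invertible as above, hence $A_{\bar z}$ (equivalently $A_0^{-1}A_{\bar z}$) is nonsingular on $\partial\M\cap U_p$ and, shrinking $\epsilon$ if necessary, on all of $U_p$. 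Together with the standing hypothesis that $\ker\oM|_q$ has constant dimension and varies smoothly with $q\in\partial\M\cap U_p$ (cf.\ Remark~\ref{rem:bd}) this places the problem in exactly the framework of \cite{LP,RM}, just as in Example~\ref{ex:mink}, only with the coefficients of the genuine Dirac operator in curved coordinates.

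It then remains to transport the notion of weak solution back and forth. Since $\oE$ is a pointwise invertible multiplication operator it alters neither supports, nor the boundary condition $\oM\Psi|_{\bM}=0$, nor the initial condition on $V_p$, and one has $\widehat\oK^\dagger=\oK^\dagger\oE^\dagger$. A weak solution $\Psi$ of \eqref{CauchyK}, tested against $\Phi\in\Gamma_c(SU_p)$ with $\oM^\dagger\Phi|_{\bM}=0$ and $\Phi|_{\{t=\epsilon\}}=0$, obeys $\scalar{\Phi}{\f}_{U_p}=\scalar{\oK^\dagger\Phi}{\Psi}_{U_p}$; substituting $\Phi\mapsto\oE^\dagger\Phi$ turns this into $\scalar{\Phi}{\oE\f}_{U_p}=\scalar{\widehat\oK^\dagger\Phi}{\Psi}_{U_p}$, which is the weak form of~\eqref{CauchyK_2} with right-hand side $\oE\f$ (which we rename $\f$), and conversely. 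Here one uses that $\oE^\dagger$, being fibrewise invertible, carries the admissible test space of $\widehat\oK$ bijectively onto that of $\oK$: by the Green identity~\eqref{Green identity} the two adjoint boundary conditions differ precisely by the extra factor $\oE^\dagger$ in the boundary pairing $\fiber{\oE^\dagger\Phi}{\gamma(e_0)\gamma(\n)\Psi}$, and the vanishing on the top face $\{t=\epsilon\}$ is preserved.

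The hard part is the small amount of genuine geometry: checking that in the particular chart $\kappa_p$ the covector $dt$ is timelike and, on the boundary, $d\bar z$ is normal to the timelike hypersurface $\partial\M$, and that a single sufficiently small $\epsilon$ makes $\kappa_p$ a diffeomorphism onto $U_p$ with $A_0$ and $A_{\bar z}$ nonsingular throughout; everything else is the bookkeeping of an invertible change of the unknown and of the test functions.
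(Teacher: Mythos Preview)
Your proposal is correct and follows essentially the same idea as the paper: take $\oE=A_0^{-1}$, the inverse of the coefficient of $\partial_t$, and check that the resulting $\widehat{\oK}$ has nonsingular normal coefficient on the boundary. The paper's proof is terser: it observes that in the chosen coordinates the operator at the single point $p$ coincides with the half-Minkowski model of Example~\ref{ex:mink} (the tangent plane of $\partial\M$ playing the role of $\N_a$), reads off $\oE|_p=(1-a\gamma(e_0)\gamma(e_n))^{-1}$ from that example, and then invokes continuity to make $\oE$ invertible and $A_{\bar z}$ nonsingular on a small enough $U_p$. You instead argue intrinsically from the principal symbol---$A_0=\sigma_{\oK}(dt)$ is invertible because $dt$ is timelike (condition~(H)), and $A_{\bar z}=\sigma_{\oK}(d\bar z)$ is invertible on $\partial\M$ because $(d\bar z)^\sharp$ is a spacelike normal---which is a cleaner and slightly more general way to reach the same conclusion without appealing to the explicit Minkowski computation. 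Your treatment of the weak-solution equivalence via $\widehat{\oK}^\dagger=\oK^\dagger\oE^\dagger$ and the substitution $\Phi\mapsto\oE^\dagger\Phi$ also spells out what the paper compresses into one sentence; your remark that the adjoint boundary conditions for $\oK$ and $\widehat{\oK}$ are intertwined by $\oE^\dagger$ is the right justification for why the test spaces correspond.
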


 \begin{proof}
  By the choice of the coordinates,  the Dirac operator will look in $p$ exactly  as for the Minkowski space computed in Example~\ref{ex:mink} with $a=dt(n_p)$ where $n_p$ is the normal at $p$ of $\hat{\Sigma}_p$ in $\partial \M$ and $t$ is the global time function of $\overline \M$. Note that the role of $\N_a$ is taken by the tangent plane of $\partial \M\hookrightarrow \M$ in $p$ and that $|a|<\beta(p)^{-1}$.
  Since everything is continuous, we can find a sufficiently small $\epsilon>0$ such that there is an invertible linear map $\oE \colon \Gamma(U_p, \mathbb C^N)\to \Gamma(U_p, \mathbb C^N)$ with $\oE|_{S_p\M}=1$ and  such that $\oE\oK_{\lambda}$ has the required form with $A_{\overline{z}}(p)=-\beta \gamma(e_0)\gamma(e_n)-a$.
  
  Moreover, we obtain a weak solution of \eqref{CauchyK_2} as required by taking the weak solution of \eqref{CauchyK} where the right handside is given by $(\oE^{-1}\f, \h, 0)$.
 \end{proof}

\begin{lemma}[Locally strong solution]\label{lem:strong} 
A weak solution $\Psi$ of the Cauchy problem~\eqref{CauchyK_2} is a strong solution on $U_p$.
\end{lemma}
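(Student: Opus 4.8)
The plan is to identify the local problem with the one treated by Lax--Phillips \cite{LP} and Rauch--Massey \cite{RM} and to borrow their results, so the argument splits into three steps: (i) putting the problem into their normal form and checking their hypotheses, (ii) passing from the weak solution to a semi-strong ($W^{1,2}$) solution via \cite[Section~1]{LP}, and (iii) upgrading the $W^{1,2}$-approximants to smooth ones. Step (i) is essentially supplied by Corollary~\ref{cor_Up}: on a small cube in $\mathbb R^{n+1}$ the operator $\widehat{\oK}=\oE\oK$ has the form~\eqref{eq:Pform}, $\widehat{\oK}=\partial_t-\oH$, and the boundary face $\{\bar z=0\}$ is non-characteristic because the transversal coefficient $A_{\bar z}$ is nonsingular there; moreover $\ker\oM$ has constant dimension and depends smoothly on the boundary point, hence is a smooth subbundle of $S\bM$. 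What I would still verify is that $\ker\oM$ is an admissible (maximal, conservative) boundary space for $\widehat{\oK}$ in the sense of \cite{LP}: apart from the constant-rank condition this is exactly the sign condition~\eqref{general bound cond} on the boundary form $\fiber{\cdot}{\gamma(e_0)\gamma(\n)\cdot}$ --- the very property that gave the boundary contributions their definite sign in the proof of Lemma~\ref{lemma: energy estimates}.

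Granting (i), step (ii) is a direct appeal to \cite[Section~1]{LP}: the weak solution $\Psi$ of~\eqref{CauchyK_2} is a semi-strong solution, i.e.\ there exist $\Psi_k\in W^{1,2}$ with $\oM\Psi_k|_{\bM}=0$, $\Psi_k\to\Psi$ and $\widehat{\oK}\Psi_k\to\f$ in $L^2$. I would recall rather than reprove the mechanism: one mollifies only in the directions \emph{tangent} to $\{\bar z=0\}$, which keeps $\oM\Psi_k|_{\bM}=0$ intact; the tangential part of the commutator $[\widehat{\oK},J_\epsilon]$ is then handled by the Friedrichs lemma, while the normal part $[A_{\bar z},J_\epsilon]\partial_{\bar z}\Psi$ is controlled by using the equation to eliminate the transversal derivative near the boundary, $\partial_{\bar z}\Psi=A_{\bar z}^{-1}\big(\f-\partial_t\Psi-\sum_j A_j\partial_{x^j}\Psi-B\Psi\big)$, trading $\partial_{\bar z}\Psi$ for $\f$ and tangential derivatives whose mollified $L^2$-norms are already under control.

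For step (iii) I would approximate each $\Psi_k\in W^{1,2}$ in $W^{1,2}$-norm --- hence in the graph norm of $\widehat{\oK}$ --- by smooth sections still annihilated by $\oM$ on $\bM$; this density statement holds because $\ker\oM$ is a smooth subbundle (split $\Psi_k$ near $\bM$ into its $\ker\oM$-component and a complement vanishing on $\bM$, then mollify tangentially), or one may instead feed $\Psi_k$ into the regularity estimates of \cite[Theorem~3.1]{RM}. A diagonal argument then yields the smooth sequence required by Definition~\ref{def:strong sol}. I expect the only genuinely delicate point, and where the real work lies, to be this boundary bookkeeping in (ii)--(iii): mollification in the normal direction is forbidden since it destroys $\oM\Psi=0$, so one must mollify purely tangentially, and the normal regularity lost in doing so has to be recovered from the equation --- which is possible precisely because $\{\bar z=0\}$ is non-characteristic, the property that Corollary~\ref{cor_Up} was engineered to provide.
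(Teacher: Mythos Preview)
Your proposal is correct and follows essentially the same route as the paper: invoke Corollary~\ref{cor_Up} to land in the Lax--Phillips setting, cite \cite{LP} for the passage from weak to semi-strong, and then upgrade the $W^{1,2}$-approximants to smooth ones by splitting along the smooth subbundle $\ker\oM$ and using standard Sobolev density for the component with vanishing trace, followed by a diagonal argument. The only differences are cosmetic: you spell out the tangential-mollification mechanism behind \cite{LP} rather than just citing it, and you flag the maximality/conservativity check on $\ker\oM$ explicitly (the paper leaves this implicit in the hypotheses of Remark~\ref{rem:bd} and Remark~\ref{rem:bd2}); your aside about feeding the $\Psi_k$ into \cite{RM} is not needed here, as the paper reserves \cite{RM} for the separate smoothness statement in Corollary~\ref{smooth small}.
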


\begin{proof} 
 The last Corollary tells us that we can apply \cite[Section 2]{LP} in order to obtain the existence of a semi-strong solution, i.e., there is a sequence of continuous sections $\Phi_k\in W^{1,2}(U_p, \mathbb C^N)$ with $\oM\Phi_k|_{\partial \M\cap U_p}=0$ and $\Vert \Phi_k-\Psi\Vert_{L^2(U_p)}\to 0$ and $\Vert\widehat{\oK_{\lambda}}\Phi_k-\f\Vert_{L^2(U_p)}\to 0$. It remains to argue, that we can approximate the $\Phi_k$  by smooth $\Psi_k$ still fulfilling the boundary condition and the convergences from above. 
 
 This can be achieved using standard  Sobolev theory; we refer to \cite{Evans} for more details: First, choose $u_i\in \Gamma(U_p, \mathbb C^N)$, $i=1,\ldots, r$,  such that for each $q\in \partial \M\cap U_p$ they form a basis of $\ker \oM|_q$ and are linearly independent in all $q\in U_p$. Since $\M$ depends smoothly on the base point and has constant rank this is always possible.
Choose  $u_j\in \Gamma(U_p, \mathbb C^N)$, $j=r+1,\ldots, N$ such that $u_1(q), \ldots, u_N(q)$ is a basis of $\mathbb C^N$ at each $q\in U_p$. A section $\Phi\in W^{1,2}(U_p, \mathbb C^N)$ can now be expressed as $\Phi=\sum_{i=1}^N a_i u_i$ for $a_i\colon U_p\to \mathbb C$. We denote by $\Phi^+$ the part of $\Phi$ spanned by $u_1$ to $u_r$ and set $\Phi^-\define\Phi-\Phi^+$. Using the $a_i$ as the new coordinates, we decompose the solution $\Psi$ into $\Psi_k^+\in W^{1,2}(U_p, \mathbb C^r)$ and  
$\Psi_k^-\in W^{1,2}(U_p, \mathbb C^{N-r})$. Thus, there is a sequence $\Psi_{k,j}^+\in \Gamma(U_p, \mathbb C^r)$ that converges to $\Psi_k^+$ in $W^{1,2}$ and analogously a smooth sequence $\Psi_{k,j}^-$ converging to $\Psi_k^-$ in $W^{1,2}$. Moreover, by definition $\mathtt{tr}\, \Psi^+_k= 0$, where $\mathtt{tr}$ is the trace map $W^{1,2}(U_p, \mathbb C^r)\to L^2 (U_p\cap \partial \M, \mathbb C^r)$. Thus, $\Psi_{k,j}^+$ can be chosen to be zero on $U_p\cap \partial \M$. Thus, $\Psi_{k,j}=\Psi_{k,j}^++\Psi_{k,j}^-$, where we use the embeddings $\mathbb C^r\hookrightarrow \mathbb C^N$ and  $\mathbb C^r\hookrightarrow \mathbb C^{N-r}$ from above, are smooth sections fulfilling $\oM\Psi_{k,j}|_{U_p\cap \partial \M}=0$. Choosing a diagonal sequence we obtain smooth $\Psi_k$ approximating $\Psi$ as in Definition~\ref{def:strong sol}. 
\end{proof}

Next we want to see whether the strong solution on $U_p$  is actually smooth. For that we would like  
to use the result \cite[Theorem 3.1]{RM} by Rauch and Massey. For $p\in \Sigma_0$ this is immediate. But for $p\in \Sigma_t$ for $t>0$ the solution might now touch the boundary and compatibility issues occur. This is expectable since just assuming  $\f\in \Gamma_c(S\M)$ and $\h\in \Gamma_c(S\Sigma_0)$ (and not as in Theorem~\ref{maintheorem} compactly supported in the interior) is not sufficient to guarantee
that the solution of the Cauchy problem~\eqref{CauchyK} is smooth. 

\subsubsection{Compatibility conditions and smoothness of the solution}

To see the appearance of the compatibility issues let us start with the easiest example:

\begin{example}
Let $\mathscr M_a$ be the half Minkowski spacetime as described in Example~\ref{ex:mink} and consider the Cauchy problem~\eqref{CauchyK} $\oK_{\lambda} \Psi=0$, $\Psi|_{t=0} = \mathfrak{h}$ and $\oM\Psi|_{\overline{z}=0}= 0$. Assume that $\oM$ does not depend on $t$, that is (e.g.) true for MIT boundary conditions.  Suppose that $\Psi$ is $k$-differentiable. Set $\oH\define \partial_t-\oK_{\lambda}$. Then it satisfies
$$ 0 = \partial_t^k \big( \oM \Psi|_{\overline{z}=0} \big)|_{t=0} =  \big( \oM \partial_t^k \Psi|_{\overline{z}=0} \big)|_{t=0} = \big( \oM \oH^k \Psi(t)|_{\overline z=0} \big)|_{t=0} =  \oM \big(\oH^k \h\big)|_{\overline z=0} \,. $$
Therefore, any initial data have to satisfy a compatibility condition. 
 \end{example}

\begin{remark}\label{star}
 In the previous example we massively used that $\oM$ and $\oH$  does not depend on $t$. In the more general case of $\oK_{\lambda}$ for a Dirac operator on a globally hyperbolic manifold in the coordinates on $U_p$ defined on page~\pageref{def:U_p} this is in general not the case.   
 
  In order to obtain a general compatibility condition on $\f$ and $\h$ set  $\oH= {\partial_t}- \oE\oK_\lambda$  with $\oE$ as in Corollary~\ref{cor_Up} and 
\begin{align}\label{eq_hk} \h_k\define 
\sum_{j=0}^{k-1} 
\frac{(k-1)!}{j! (k-1-j)!}
\big( \partial^j_t  \oH \big)|_{V_p} \h_{k-1-j}+ \partial_t^k  \f|_{V_p}
\end{align}
for all $k\geq 1$ with $\h_0=\h$, $V_p=U_p\cap \Sigma_0$.  Impose that the data $\h\in \Gamma_c(S\Sigma_0)$ and $\f\in\Gamma(S\M$) satisfy
$$ \sum_{j=1}^k 
 \frac{k!}{(k-j)!}
( \partial_t^j \oM )|_{V_p}\h_{j-1} =0.
$$  
Translating this back for our Dirac Cauchy problem ~\eqref{CauchyDir2} in the Hamiltonian form where $\Ham\define \partial_t-\imath\beta \gamma(e_0)\Dir$ 
$$(\partial_t - \Ham )\psi = \imath \beta \gamma(e_0) f$$
the compatibility condition for $h\in\Gamma_c(S\Sigma_0)$ and $\f\in\Gamma(S\M)$ reduces to
$$ \sum_{j=1}^k 
\frac{(k)!}{j! (k-j)!}
\Big( \partial_t^j \Mbc \Big)\Big|_{\partial\Sigma_0} h_{k-1} =0$$
for all $k\geq 1$ where 
$$ h_k\define 
\sum_{j=0}^{k-1} 
\frac{(k-1)!}{j! (k-1-j)!}
 (\partial^j_t \Ham)|_{\partial\Sigma_0} \, h_{k-1-j} + \partial_t^p \big(-\imath \beta \gamma(e_0)f)|_{\partial\Sigma_0}
$$ 
with $h_0=h$. 
\end{remark}

With the above definitions the following corollary follows directly by localizing a solution in any set $U_p$ defined as above and then applying \cite[Theorem 3.1, see also p.~305]{RM}.

\begin{corollary}[Local smooth solution]\label{smooth small}
Let $U_p$ as above. Then a strong solution  for the Cauchy problem~\eqref{CauchyK} is smooth on $U_p$ if and only if $\h\in \Gamma_c(U_p, \mathbb C^N)$ and $\f\in\Gamma(U_p, \mathbb C^N$) satisfy
\begin{equation}\label{compatibility cond}
 \sum_{j=1}^k 
 \frac{k!}{(k-j)!}
( \partial_t^j \oM )|_{\partial\Sigma_0}\, \h_{k-1} =0
\end{equation}
with $\h_i$ as in \eqref{eq_hk}.
\end{corollary}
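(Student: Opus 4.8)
The plan is to reduce Corollary~\ref{smooth small} to the regularity theory of Rauch--Massey by exploiting the chart constructions and equivalences already established. First I would recall that by Corollary~\ref{cor_Up}, after composing with the invertible endomorphism $\oE$, any weak solution of the Cauchy problem~\eqref{CauchyK} on $U_p$ corresponds to a weak solution of~\eqref{CauchyK_2}, where the operator $\widehat{\oK}=\oE\oK$ has the symmetric-positive-hyperbolic normal form~\eqref{eq:Pform}, i.e.\ $\widehat{\oK}=\partial_t-\oH$ with coefficient $1$ in front of $\partial_t$ and $A_{\bar z}$ nonsingular on the boundary $\{\bar z=0\}$. By Lemma~\ref{lem:strong}, such a weak solution is locally a strong solution, so there is an approximating sequence $\Psi_k\in\Gamma(U_p,\mathbb C^N)$ with $\oM\Psi_k|_{\partial\M\cap U_p}=0$, $\Psi_k\to\Psi$ and $\widehat{\oK}\Psi_k\to\f$ in $L^2(U_p)$. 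This places us, after shrinking $U_p$ to a sufficiently small cube as in Example~\ref{ex:mink}, precisely in the hypotheses of \cite[Theorem~3.1]{RM}: a first-order hyperbolic system in normal form with smoothly varying, nonsingular boundary matrix and a boundary operator of constant rank.

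Next I would translate the abstract compatibility hypothesis of Rauch--Massey into the explicit recursion~\eqref{compatibility cond}. The point is that if the strong solution $\Psi$ were $C^k$ up to $t=0$ and $\bar z=0$, then differentiating the identity $\widehat{\oK}\Psi=\f$ repeatedly in $t$ and using $\partial_t\Psi=\oH\Psi+\f$ shows by induction that $\partial_t^m\Psi|_{t=0}=\h_m$ for the sections $\h_m$ defined recursively in Remark~\ref{star} (this is a formal Leibniz-rule computation: $\partial_t^m(\partial_t-\oH)\Psi = \partial_t^{m+1}\Psi - \sum_j\binom{m}{j}(\partial_t^j\oH)(\partial_t^{m-j}\Psi)$, which rearranges to the stated formula with $\h_0=\h$). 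Feeding these into the boundary condition $\oM\Psi|_{\bar z=0}=0$ and differentiating $k$ times in $t$ at $t=0$ yields exactly $\sum_{j=1}^k\frac{k!}{(k-j)!}(\partial_t^j\oM)|_{\partial\Sigma_0}\,\h_{k-1}=0$, so~\eqref{compatibility cond} is \emph{necessary} for a $C^k$ solution, and hence~\eqref{compatibility cond} for all $k$ is necessary for smoothness. For the converse I would invoke \cite[Theorem~3.1]{RM}: when the data $\f,\h$ satisfy these compatibility conditions to all orders, that theorem provides $W^{m,2}$-regularity of the strong solution for every $m$, and by Sobolev embedding the solution is smooth on $U_p$ (up to and including the boundary portion $\bar z=0$ and the initial slice $t=0$).

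The main obstacle I anticipate is bookkeeping rather than conceptual: matching the index conventions and the precise form of the compatibility tower in \cite{RM} with the operator $\widehat{\oK}=\partial_t-\oH$ obtained here, in particular verifying that the $t$-dependence of both $\oH$ and $\oM$ (which, unlike the toy model in the preceding Example, genuinely varies in the Fermi coordinates $\kappa_p$ of page~\pageref{def:U_p}) is correctly accounted for in the recursion for $\h_k$. One must also be slightly careful that the transition from the $\oM$-form of the boundary condition to \eqref{compatibility cond} is intrinsic, i.e.\ does not depend on the choice of the frame $u_1,\dots,u_N$ used in the proof of Lemma~\ref{lem:strong} to split off $\ker\oM$; since $\oM$ has constant kernel dimension and varies smoothly, $\partial_t^j\oM$ is a well-defined smooth bundle map along $\partial\M\cap U_p$ and the condition is coordinate-independent. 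Granting the hyperbolic-system regularity input from \cite{LP,RM}, the corollary then follows by combining Corollary~\ref{cor_Up}, Lemma~\ref{lem:strong}, and the above necessity/sufficiency argument for~\eqref{compatibility cond}.
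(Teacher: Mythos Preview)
Your proposal is correct and follows essentially the same approach as the paper: the paper's entire argument is the single sentence preceding the corollary, namely that one localizes to $U_p$ and applies \cite[Theorem~3.1]{RM}, with the compatibility tower already derived in Remark~\ref{star}. Your write-up simply unpacks this in more detail (the necessity direction via repeated $t$-differentiation, the sufficiency via Rauch--Massey plus Sobolev embedding), which is appropriate since the paper itself gives no further proof beyond that pointer.
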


\begin{remark}\label{rem:compcond}
 If we choose  initial data $\h$ and $\f$ with compact support in the interior of $\Sigma_0$ resp. $\M$, the compatibility condition is automatically satisfied. Actually for $\f$ it is enough to be zero in a neighborhood of $\partial \Sigma_0\subset \M$.
 \end{remark}

Since the compatibility equation~\eqref{compatibility cond} is an 'if and only if'-criterion for smoothness, we can use iteratively already obtained smoothness for $U_p$ with $t(p)<s$  to obtain that the initial data on $U_q$ with $t(p)=s$ fulfills the compatibility criterion as well. That way we will obtain smoothness on the full time strip: 
 
\begin{corollary}\label{smooth_on_T}
 A weak solution of the Cauchy problem~\eqref{CauchyK_2} for $\f\in \Gamma_{cc}(S\M)$ and $\h\in \Gamma_{cc}(S\Sigma_0)$ in an open subset $\mathcal{U}$ of $\mathcal{T}$  is smooth. In particular, there is a smooth solution of the Dirac Cauchy problem~\eqref{CauchyDir} in $\mathcal{U}$. 
 
 The above statement remains true for all $\f\in \Gamma_{c}(S\M)$ and $\h\in \Gamma_{c}(S\Sigma_0)$  fulfilling \eqref{compatibility cond}.
\end{corollary}

\begin{proof}
 First let $p\in \partial \M\cap \Sigma_{\hat{t}}$ for some $\hat{t}\in
[0,T]$ and let $\varrho\colon [0,\hat{t}]\to \partial M$ be a timelike curve
with $\varrho(0)\in \Sigma_0$ and $p=\varrho(\hat{t})$. 
We fix $\epsilon>0$ such that 
we have Fermi coordinates on a 'cube' $U_{\varrho(t)}$ around $\varrho(t)$
as in Section~\ref{diffsol} for all $t\in [0,\hat{t}]$ and such that Corollary~\ref{cor_Up} holds for those cubes. This is always possible since  the image of 
$\varrho$ is compact and everything depends smoothly on the basepoints.

For $U_{\varrho(0)}$ we know that the compatibility condition~
\eqref{compatibility cond} is fulfilled by assumption. Thus, Corollary~
\ref{smooth small} tells us that the weak solution $\Psi$ is smooth in
$U_{\varrho(0)}$ and that for every $a\in [0,\epsilon]$ the function
$\h_a\define \Psi|_{U_{\varrho(0)}\cap \Sigma_{a}}$ as new initial data
$\h$ together with the original $\f$ still fulfill the compatibility
condition. Moreover, $\Psi|_{U_{\varrho(a)}}$ is still a weak solution to
the initial data $(\h_a, \f)$ on $U_{\varrho(a)}$. Thus, we can again use
Corollary~\ref{smooth small} where $\Sigma_a$ now takes the role of $\Sigma_0$. Iterating this procedure, we obtain smoothness on
all $U_{\varrho(t)}$ for $t\in [0,\hat{t}]$, i.e. in particular in $p$. 

For $p\in \M\setminus \partial \M$ we choose a timelike curve $\varrho
\colon [0,\hat{t}]\to \M\setminus \partial \M$  with $\varrho(0)\in
\Sigma_0$ and $p=\varrho(T)$ and proceed as before. It is even easier since we can just use geodesic normal coordinates in the Cauchy surfaces around each $\rho(t)$. The existence of smooth solutions to the Dirac Cauchy problem~\eqref{CauchyDir2} then follows by Lemma~\ref{Equivalence Dir and K}.
\end{proof}

\begin{remark}\label{rem:bd2}
In view of Remark~\ref{rem:bd}, we want to comment on the assumptions on the boundary condition that we have used up to here. The energy inequalities~\eqref{Energy Inequality} and~\eqref{Energy Inequality2} need that $\oM\Psi|_{\bM}=0$ and $\oM^\dagger\Psi|_{\bM}=0$ both imply 
  $$\fiber{\Psi}{\gamma(e_0)\gamma(\n)\Psi}_q =0$$
  for all $q\in\bM$. Moreover, in order to apply \cite{LP} in Lemma~\ref{lem:strong} and \cite{RM} in Corollary~\ref{smooth small}, we additionally use that $\ker\oM|_q$ is nonempty and varies smoothly with $q\in\bM$. \\
  The properties collected above are valid for the MIT bag boundary condition $\Mbc\define \gamma(\n) -\imath$ (and analogously for $\Mbc\define \gamma(\n) +\imath$). Indeed, on account of
\begin{align*}
\fiber{\Psi }{\gamma(e_0) \gamma(\n) \Psi}_q = \fiber{\gamma(e_0) \Psi }{ \gamma(\n) \Psi}_q &=\fiber{- \gamma(\n) \gamma(e_0)  \Psi }{ \Psi}_q \\ &= \fiber{\gamma(e_0) \gamma(\n) \Psi }{ \Psi}_q = \overline{\fiber{\Psi }{\gamma(e_0) \gamma(\n) \Psi}_q},
\end{align*}
where in the third equality we used $g(e_0, \n)=0$, we obtain $  \fiber{\Psi }{\gamma(e_0) \gamma(\n) \Psi}_q \in \RR\,.$
By using MIT boundary conditions, we have
\begin{align*}
\fiber{\Psi }{\gamma(e_0) \gamma( \n )\Psi}_q &= \fiber{\Psi}{ \gamma(e_0) \imath \Psi}_q=\fiber{\gamma(e_0)\Psi}{  \imath \Psi}_q=\\
&=\fiber{- \gamma(e_0) \imath\Psi}{  \Psi}_q=\fiber{-\gamma(e_0) \gamma(\n) \Psi}{  \Psi}_q=-\overline{\fiber{\Psi}{ \gamma(e_0) \gamma(\n)\Psi}_q }\,,
\end{align*}
which implies $\fiber{\Psi }{\gamma(e_0) \gamma(\n) \Psi}_q=0$ for $\oM\Psi=0$.
The rest follows since $\oM=\oM^\dagger$ as we will see in the following: Note that this in particular implies that $\Ham$ as in Remark~\ref{star} restricted to a fixed Cauchy surface $\Sigma_t$ is essentially self-adjoint. First we rewrite the boundary condition $\oM\Psi=0$ as 
$$P_+\Psi:=\frac{1}{2}(\Id+\imath\gamma(\n))\Psi=0$$
 and we set $P_- \Psi:=\frac{1}{2}(\Id-\imath \gamma(\n))\Psi=0$. It is easy to see that
$$P_++P_-=\Id \qquad P_\pm^2=P_\pm \qquad \text{and}\qquad P_+P_-=P_-P_+$$
and in particular that
$\fiber{P_+\Psi }{P_-\Phi}_q=0$ for all $\Psi, \Phi\in S_q\M$. 
This implies that 
$P_+\Psi=0$ is tantamount to set $\Psi=P_- \tilde{\Psi}$ for some $\tilde\Psi$.
Thus,  for $\Psi$ verifying $\oM\Psi=0$, the condition $\fiber{\Phi }{\gamma(e_0) \gamma( \n )\Psi}_q =0$ holds if and only if for any $\tilde{\Psi}$ it is satisfied
\begin{align*}
0&=2\fiber{\Phi }{\gamma(e_0) \gamma( \n )P_- \tilde\Psi}_q=\fiber{\gamma(e_0)\Phi }{( \gamma( \n )+\imath) \tilde\Psi}_q=
\\& =\fiber{(- \gamma( \n )-\imath)\gamma(e_0)\Psi }{ \tilde\Phi}_q=\fiber{\gamma(e_0)( \gamma( \n )-\imath)\Phi }{ \tilde\Psi}_q\,,
\end{align*}
which implies $ \oM\Phi=(\gamma( \n )-\imath)\Phi =0\,.$

Another example is the chirality operator $\Mbc \define (\text{Id}-\gamma(\n) \mathcal{G})$ where $\mathcal{G}$ is the
restriction to $\partial \M$ of an endomorphism-field of $S\M$ which is involutive,
unitary, parallel and anti-commuting with the Clifford multiplication on $\M$, \cite[Section 1.5]{Gin}.
\end{remark}

\subsection{Existence of a weak solution}

Up to now we have seen in Corollary~\ref{smooth_on_T}, Proposition~\ref{prop:finite} and Proposition~\ref{prop:unique}, that if there exists a weak solution $\Psi$ of the Cauchy problem~\eqref{CauchyK}, compare Definition~\ref{def:Weak existence}, then it is smooth, unique and it vanishes outside $\mathcal{V}$ defined by~\eqref{def:V}. It just remains to prove existence.

\begin{thm}[Weak existence]\label{thm:Weak existence}
There exists a unique weak solution $\Psi\in\cH$ to the Cauchy problem~\eqref{CauchyK} with $\f\in \Gamma_{cc}(S\M)$ and $\h\equiv 0$, restricted to $\T$.
\end{thm}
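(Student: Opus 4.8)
The plan is to obtain the weak solution by a standard duality/functional-analytic argument based on the adjoint energy estimate in Lemma~\ref{lemma: energy estimates2}. Consider the linear subspace
\[
\mathcal{D}\define\{\Phi\in\Gamma_c(S\T)\ |\ \oM^\dagger\Phi|_{\bM}=0,\ \Phi|_{\Sigma_T}\equiv 0\}\subset\cH,
\]
and on $\oK^\dagger(\mathcal{D})\subset\cH$ define the linear functional $\ell(\oK^\dagger\Phi)\define\scalar{\Phi}{\f}_{\T}$. The first point is that $\ell$ is well defined: if $\oK^\dagger\Phi_1=\oK^\dagger\Phi_2$ then $\Phi\define\Phi_1-\Phi_2$ lies in $\mathcal{D}$ with $\oK^\dagger\Phi=0$, and one must check $\scalar{\Phi}{\f}_{\T}=0$. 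This is where the choice of $\R$ and $\Rv$ matters: since $\f\in\Gamma_{cc}(S\M)$, its support (intersected with $\T$) is a compact set, and Lemma~\ref{lemma: energy estimates2} applied on $\Rv=\T\cap J^+(\mathcal{O}')$ with $\mathcal{O}'=J^-(\V\cap\Sigma_T)\cap\Sigma_0$ forces $\Phi$ to vanish on a set containing $\supp\f\cap\T$; hence $\scalar{\Phi}{\f}_{\T}=0$. The same estimate~\eqref{Energy Inequality2} gives boundedness of $\ell$: by Cauchy--Schwarz and Lemma~\ref{lemma: energy estimates2},
\[
|\ell(\oK^\dagger\Phi)|=|\scalar{\Phi}{\f}_{\T}|\le\|\Phi\|_{L^2(\Rv)}\|\f\|_{L^2(\Rv)}\le C\|\f\|_{L^2(\Rv)}\,\|\oK^\dagger\Phi\|_{L^2(\Rv)}\le C'\|\oK^\dagger\Phi\|_{L^2(\T)}.
\]

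Having a well-defined bounded linear functional on the subspace $\oK^\dagger(\mathcal{D})\subset\cH$, I would extend it (Hahn--Banach, or first extend continuously to the closure and then project off the orthogonal complement) to a bounded linear functional on all of $\cH$, and invoke the Riesz representation theorem to produce $\Psi\in\cH$ with $\ell(\chi)=\scalar{\chi}{\Psi}_{\T}$ for all $\chi\in\cH$. Specializing to $\chi=\oK^\dagger\Phi$ with $\Phi\in\mathcal{D}$ yields exactly
\[
\scalar{\Phi}{\f}_{\T}=\scalar{\oK^\dagger\Phi}{\Psi}_{\T}\qquad\text{for all }\Phi\in\mathcal{D},
\]
which is the defining relation~\eqref{scalprod} of a weak solution in Definition~\ref{def:Weak existence}. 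Uniqueness of the weak solution is already covered: any weak solution is smooth by Corollary~\ref{smooth_on_T}, hence a genuine solution of~\eqref{CauchyK}, and Proposition~\ref{prop:unique} then gives uniqueness; alternatively, the difference of two weak solutions is orthogonal to the dense-enough range $\oK^\dagger(\mathcal{D})$ and one argues directly via the energy inequality.

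The step I expect to be the main obstacle is the well-definedness of $\ell$ on $\oK^\dagger(\mathcal{D})$, i.e. making precise that $\Phi\in\mathcal{D}$ with $\oK^\dagger\Phi=0$ implies $\Phi$ vanishes on $\supp\f\cap\T$. One has to be careful that Lemma~\ref{lemma: energy estimates2} is stated for $\oK^\dagger$ a symmetric \emph{positive} hyperbolic system on $\Rv$, which requires choosing $\lambda$ large enough relative to the compact set $\Rv$; since $\supp\f$ is compact one can fix such a $\lambda$ once and for all, and this is precisely why $\R$ (and dually $\Rv$) were set up as compact sets determined by $\V$ and $\supp\f$ back at the start of Section~\ref{sec:fps}. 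A secondary technical point is the passage from the estimate on $\Rv$ to an estimate on all of $\T$ (needed for boundedness of $\ell$ on $\cH$, not just on the image space with its intrinsic norm); here one uses that the relevant $\Phi$ are supported in $\Rv$ up to the part near $\Sigma_T$ where they vanish, together with finite propagation speed for $\oK^\dagger$ (the time-reversed analogue of Proposition~\ref{prop:finite}), so that $\|\oK^\dagger\Phi\|_{L^2(\Rv)}$ and $\|\oK^\dagger\Phi\|_{L^2(\T)}$ control each other on the relevant subspace. Modulo these bookkeeping points, the argument is the textbook Riesz-representation proof of existence of weak solutions for symmetric positive hyperbolic systems, e.g. as in~\cite{Fr2}.
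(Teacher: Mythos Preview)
Your proposal is correct and follows essentially the same route as the paper: define the functional $\ell$ on the range $\oK^\dagger(\mathcal{D})$ via $\ell(\oK^\dagger\Phi)=\scalar{\Phi}{\f}$, use the adjoint energy inequality~\eqref{Energy Inequality2} on $\Rv$ both for well-definedness (triviality of $\ker\oK^\dagger$ on $\Rv$) and for boundedness, then extend and apply Riesz. The paper phrases the first step as ``the kernel of $\oK^\dagger$ is trivial'' rather than your more explicit well-definedness argument, and it passes immediately to scalar products over $\Rv$ (using $\supp\f\cap\T\subset\V\subset\Rv$) rather than over $\T$, but these are cosmetic differences. Your ``secondary technical point'' is in fact a non-issue: since $\Rv\subset\T$, the inequality $\|\oK^\dagger\Phi\|_{L^2(\Rv)}\le\|\oK^\dagger\Phi\|_{L^2(\T)}$ is trivial, and no control of $\Phi$ outside $\Rv$ is needed because $\f$ vanishes there; you do not need any finite-propagation argument for $\Phi$ itself.
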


\begin{proof}  Let $\mathcal{U}$ be a compact subset with $\supp \f \Subset \mathcal{U}$ and set  $\mathcal{V}\define J^+(\supp \f)\cap \T$ and  $
\mathcal{V}'\define J^+(\mathcal{U})\cap \T$. Let $\Rv(U)\define \T\cap J^+( J^-(U \cap \Sigma_T) \cap \Sigma_0)$ be defined for any $U\Subset \T$.  Note that $\Rv\define \Rv(\mathcal{V})\Subset \Rv(\mathcal{V}')$.  First we will show that it is enough to find a $\Psi$ such that 
\begin{equation}\label{scalprodRv}
 \scalar{\Phi}{\f}_{\Rv(\mathcal{V}')}=\scalar{\oK^\dagger \Phi}{\Psi}_{\Rv(\mathcal{V}')}
\end{equation}
for all $\Phi\in \Gamma_c(SW)$ where $W\define \text{interior}(\Rv(\mathcal{V}'))\cup (\Rv(\mathcal{V}')\cap \partial \T)$  satisfying $M^\dagger\Phi|_{\partial\M}=0$ and $\Phi|_{\Sigma_T}=0$: Then, by Definition~\ref{def:Weak existence}  $\Psi$ is a weak solution of \eqref{CauchyK} both restricted to $W$ and restricted to $\hat{W}\define \text{interior}(\Rv)\cup (\Rv\cap \partial \T)$.  Hence, by Lemma~\ref{lem:strong} and Corollary~\ref{smooth_on_T} $\Psi$ is a smooth solution on $W$. Using Proposition~\ref{prop:finite} we obtain  that $\Psi|_{\hat{W}\setminus W}=0$. Hence, we can extend $\Psi$ by zero outside $\hat{W}$ to obtain a weak solution on all of $\T$. \medskip 

Hence, it remains to find a weak solution on $\Rv(\mathcal{V}')$: 
 By Lemma~\ref{lemma: energy estimates2} and using arguments similar to Proposition~\ref{prop:unique}, we notice that the  kernel of the operator $\oK_{\lambda}^\dagger$ acting on $$\text{dom}\,  \oK_{\lambda}^\dagger\define \{\Phi\in \Gamma_c(S\T)\ |\ \Phi|_{\Sigma_T}=0, \Mbc^\dagger \Phi|_{\partial \M}=0\}$$ is trivial.  Let now $\ell\colon \oK_{\lambda}^\dagger(\text{dom}\,  \oK_{\lambda}^\dagger)	\to \CC$ be the linear functional defined by
$$\ell(\Theta)=\scalar{\Phi}{\f}_{\Rv(\mathcal{V}')}$$
where $\Phi$ satisfies $\oK_{\lambda}^\dagger \Phi=\Theta$. By the energy inequality~\eqref{Energy Inequality2}, $\ell$ is bounded:
\begin{align*}\ell(\Theta)=&\scalar{\Phi}{\f}_{\Rv(\mathcal{V}')} \leq \|\f\|_{L^2(\Rv(\mathcal{V}'))} \, \|\Phi\|_{L^2(\Rv(\mathcal{V}'))} \\ \leq& c \|\f\|_{L^2(\Rv(\mathcal{V}'))} \|\oK_{\lambda}^\dagger \Phi\|_{L^2(\Rv(\mathcal{V}'))} = c \|\f\|_{L^2(\Rv(\mathcal{V}'))} \|\Theta\|_{L^2(\Rv(\mathcal{V}'))} ,\end{align*}
where in the first inequality we used Cauchy-Schwartz inequality. 
Then $\ell$ can be extended to a continuous functional defined on the $L^2$-completion of $\oK_{\lambda}^\dagger(\text{dom} \, \oK_{\lambda}^\dagger)$ denoted by  $\mathscr{H}\subset \mathcal{H}$. Finally, by Riesz representation theorem, there exists a unique element $\Psi\in \mathcal{H}$ such that 
$$ \ell({\Theta}) =  \scalar{\Theta}{\Psi}_{\Rv(\mathcal{V}')} \,. $$ 
for all $\Theta\in \oK_{\lambda}^\dagger(\text{dom}\, \oK_{\lambda}^\dagger)$. Thus, we obtain
$$ \scalar{\Phi}{\f}_{\Rv(\mathcal{V}')}= \ell({\Theta})=\scalar{\Theta}{\Psi}_{\Rv(\mathcal{V}')} =\scalar{\oK_{\lambda}^\dagger \Phi}{\Psi}_{\Rv(\mathcal{V}')}$$
for all $\Phi\in \text{dom} \oK_{\lambda}^\dagger$. This concludes our proof.
\end{proof}

\section{Global well-posedness of Cauchy problem}
\label{sec:gs}
Up to now we obtained a weak solution in any time strip $t^{-1}([0, T])$  in Theorem~\ref{thm:Weak existence} and showed that it is actually smooth if the initial data are compactly supported in the interior (or more generally fulfill the compatibility condition~\eqref{compatibility cond}, compare Remark~\ref{rem:compcond}). 
We can now easily put everything together to  obtain global well-posedness of the Cauchy problem~\eqref{CauchyDir}--the only comment missing is about negative time:

\begin{remark}\label{rem-negT}When considering~\eqref{CauchyDir} on $t^{-1}([-T,0])$ for some $T>0$ one can map this to the problem in forward time by  a time reversal: If $t\mapsto -t$, $\partial_t\mapsto -\partial_t$ and $e_0\mapsto -e_0$. In particular $-dt$ is then future-directing which makes $\sigma_{\oK_\lambda}(-dt)$ still positive definite. Thus,  going again the way via $\oK_\lambda$ we obtain a smooth unique solution of the original Dirac problem \eqref{CauchyDir} on all $t^{-1}([-T,0])$ for all $T>0$.
\end{remark}

\begin{proof}[Proof of Theorem~\ref{maintheorem}]
Fix $h\in\Gamma_{cc}(S\Sigma_0)$. 
On account of Theorem~\ref{thm:Weak existence}, for any $T\in [0,\infty)$ there exists a weak solution $\Psi_T$ to the Cauchy problem~\eqref{CauchyK} in the time strip $\T_T\define t^{-1}([0,T])$. Combining Corollary~\ref{cor_Up} with Corollary~\ref{smooth_on_T}, we get in particular that $\Psi_T$ is smooth in the time strip $\T_T$. By applying Lemma~\ref{Equivalence Dir and K}, we finally obtain a smooth solution $\psi_T$ to the Dirac Cauchy problem~\eqref{CauchyDir} in the time strip $\T_T$. By uniqueness of the solution, see Proposition~\ref{prop:unique}, we have $\Psi_{T_1}|_{t^{-1}[0,T_1]} =\Psi_{T_2}|_{t^{-1}[0,T_1]}$ for all $T_1<T_2$. Hence, we obtain a smooth solution $\Psi$ of ~\eqref{CauchyDir} on $t^{-1}([0,\infty))$. By Remark~\ref{rem-negT} we get analogously a smooth solution $\tilde{\Psi}$ on $t^{-1}((-\infty, 0])$.  

It is left to show that $\psi$ and $\tilde{\Psi}$ glue smoothly at $t=0$. For that  let $\phi$ be the solution of $\Dir\phi =f$ on $t^{-1}([-1,1])$, $\phi|_{\Sigma_{-1}=t^{-1}(-1)}=\tilde{\Psi}|_{\Sigma_{-1}}$ and $M\phi=0$ on $\partial \M\cap t^{-1}([-1,1])$.  By uniqueness we have $\phi|_{t^{-1}([-1,0])}=\tilde{\Psi}|_{t^{-1}([-1,0])}$ and, hence, $\phi|_{\Sigma_0}=\tilde{\Psi}_{\Sigma_0}=\Psi_{\Sigma_0}$. 
Again by uniqueness $\phi|_{t^{-1}([0,1])}={\Psi}|_{t^{-1}([0,1])}$ and, thus, by smoothness of $\phi$, the solution $\Psi$ and $\tilde{\Psi}$ glue to a smooth solution of  \eqref{CauchyDir} on all of $\M$.

For the continuous dependency on the initial data, see  below.
\end{proof}

We are now in the position to discuss the stability of the Cauchy problem. Since the proof is independent on the presence of the boundary and it does rely mostly on functional analytic techniques, we shall omit it and we refer to \cite[Section 5]{Ba} for further details.
\begin{proposition}
Consider a globally hyperbolic spacetime $\M$ with boundary $\bM$ and denote with $S\M$ the spinor bundle over $\M$. Moreover,  let 
$\Gamma_{0}(S\Sigma_0)\times \Gamma_{0}(S\M)$ denote the space of data satisfying the compatibility condition~\eqref{compatibility cond}. Then the map
$$ \Gamma_{0}(S\Sigma_0) \times \Gamma_0(S\M) \to  \Gamma_{sc}(S\M)$$
 which assign to $  (\h,\f) $ a solution $\Psi$ to the Cauchy problem~\eqref{CauchyK} is continuous.
\end{proposition}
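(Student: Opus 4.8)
The plan is to exploit that the assignment $(\h,\f)\mapsto\Psi$ is linear and to deduce its continuity from a closed graph theorem, which is precisely why, as the paper remarks, the argument is essentially functional-analytic. By Lemma~\ref{Equivalence Dir and K} it suffices to treat the symmetric positive hyperbolic problem~\eqref{CauchyK}: the gauge transformation $\psi\mapsto e^{-\lambda t}\psi$ together with the affine change of right-hand side relating $(f,h)$ to $(\f,\h)$ are topological isomorphisms of the respective section spaces, so continuity of the solution operator for $\oK$ transfers back to $\Dir$. Write $\mathrm{sol}\colon\Gamma_{0}(S\Sigma_0)\times\Gamma_{0}(S\M)\to\Gamma_{sc}(S\M)$, $(\h,\f)\mapsto\Psi$, for the solution operator: it is well defined by Corollary~\ref{smooth small} together with Theorem~\ref{maintheorem}, single-valued by the uniqueness statement of Proposition~\ref{prop:unique}, and linear because the Cauchy problem is linear.

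Next I equip all spaces with their natural topologies. The section space $\Gamma_c$ carries its usual strict LF-topology and $\Gamma_{sc}(S\M)$ is likewise a countable strict inductive limit of Fréchet spaces over the spacelike compact supports. Since the compatibility constraints~\eqref{compatibility cond} are continuous linear conditions on finite jets along $\partial\Sigma_0$, the subspace $\Gamma_{0,K}$ of data supported in a fixed compact $K$ is closed in $\Gamma_{K}$, hence Fréchet, and $\Gamma_0=\varinjlim_K\Gamma_{0,K}$ is again a strict LF-space carrying the subspace topology (and, by Remark~\ref{rem:compcond}, it contains $\Gamma_{cc}$). All spaces in sight are therefore webbed and ultrabornological, so de Wilde's closed graph theorem applies and the statement reduces to showing that the graph of $\mathrm{sol}$ is closed.

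To that end, let $(\h_k,\f_k)\to(\h,\f)$ in the domain and suppose $\Psi_k\define\mathrm{sol}(\h_k,\f_k)\to\Phi$ in $\Gamma_{sc}(S\M)$. Since $\oK$ is a differential operator it acts continuously on $\Gamma_{sc}(S\M)$, so $\oK\Phi=\lim_k\oK\Psi_k=\lim_k\f_k=\f$; since restriction to the spacelike Cauchy surface $\Sigma_0$ and to the timelike boundary $\bM$ are continuous, also $\Phi|_{\Sigma_0}=\lim_k\Psi_k|_{\Sigma_0}=\h$ and $\oM\Phi|_{\bM}=\lim_k\oM\Psi_k|_{\bM}=0$. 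Hence $\Phi$ solves~\eqref{CauchyK} with data $(\h,\f)$, and uniqueness (Proposition~\ref{prop:unique}) forces $\Phi=\mathrm{sol}(\h,\f)$. Thus the graph is closed, $\mathrm{sol}$ is continuous, and transporting back through Lemma~\ref{Equivalence Dir and K} yields continuity of the solution operator for~\eqref{CauchyDir}.

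A more hands-on alternative --- the route of~\cite[Section~5]{Ba} --- is to upgrade the energy inequality of Lemma~\ref{lemma: energy estimates} to every Sobolev order by commuting tangential derivatives through $\oK$ and trading the remaining derivatives via the equation (the Rauch--Massey mechanism already invoked for Corollary~\ref{smooth small}), so as to obtain for each compact $K\subset\M$ and each $s$ an estimate $\|\Psi\|_{\sobolev^s(K)}\lesssim\|(\h,\f)\|_{\sobolev^{s'}(J^-(K)\cap(\Sigma_0\cup\M))}$ with constants depending only on $K,s$; finite propagation of speed (Proposition~\ref{prop:finite}) guarantees that only this fixed compact region of the data enters, Sobolev embedding converts the estimate into the $C^k$-bounds on $K$ that define the topology of $\Gamma_{sc}(S\M)$, and one patches along a time foliation exactly as in the proof of Corollary~\ref{smooth_on_T}. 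In either approach the genuine difficulty is the uniform control of the solution up to the timelike boundary at all differentiability orders: in the closed-graph argument this is already packaged into Corollary~\ref{smooth_on_T} and the uniqueness, so only the soft topological bookkeeping (the webbedness and ultrabornologicity of $\Gamma_0$ with its subspace topology, and admissibility of the limit $\Phi$) remains; in the constructive argument it is the iteration of the boundary energy estimates along the Cauchy foliation.
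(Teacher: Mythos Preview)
The paper gives no proof of its own here, deferring entirely to \cite[Section~5]{Ba} with the remark that the argument is functional-analytic and independent of the boundary. Your closed-graph argument via de Wilde's theorem is correct and is precisely the kind of soft argument intended; indeed, the companion open-mapping/closed-graph machinery for LF-spaces is a standard device in this setting and is essentially what underlies B\"ar's treatment of the boundaryless case. The alternative energy-estimate route you sketch is also viable and more constructive.

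One minor technical caveat worth flagging: the claim that the strict inductive-limit topology on $\Gamma_0=\varinjlim_K\Gamma_{0,K}$ coincides with the subspace topology inherited from $\Gamma_c$ is not automatic for closed subspaces of LF-spaces in general. For the closed graph theorem, however, you do not need this: it suffices that the domain be ultrabornological and the codomain webbed, and both properties hold for strict countable inductive limits of Fr\'echet spaces irrespective of how $\Gamma_0$ sits inside $\Gamma_c$. So the argument goes through as written.
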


A byproduct of the well-posedness of the Cauchy problem is the existence of Green operators:
 
\begin{proposition}
The Dirac operator is Green hyperbolic, i.e., there exist linear  maps, called \emph{advanced/retarded Green operator},
$\Green^\pm\colon \Gamma_{cc}(S\M)  \to \Gamma_{sc}(S\M)$ satisfying
\begin{enumerate}[(i)]
\item $\Green^\pm \circ \Dir\,  f = \Dir\circ \Green^\pm f=f$ for all $ f \in \Gamma_{cc}(S\M)$;
\item  $\supp(\Green^\pm f ) \subset J^\pm (\supp f )$ for all $f \in \Gamma_{cc} (S\M)$,
\end{enumerate}
where $J^\pm$ denote the causal future (+) and past (-).
\end{proposition}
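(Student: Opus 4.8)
The plan is to construct the Green operators $\Green^\pm$ directly from the well-posedness established in Theorem~\ref{maintheorem}. Given $f\in\Gamma_{cc}(S\M)$, I would like to define $\Green^+f$ to be the unique spacelike compact smooth solution $\psi$ of the inhomogeneous Cauchy problem~\eqref{CauchyDir} with source $f$, MIT boundary condition on $\bM$, and \emph{trivial} initial data $h=0$ on some Cauchy surface $\Sigma_0$ chosen so that $\supp f$ lies entirely in its causal future. For $\Green^-$ one proceeds symmetrically, using a Cauchy surface to the future of $\supp f$ and solving backwards in time (the argument in Proposition~\ref{prop:finite} and Theorem~\ref{thm:Weak existence} works verbatim for the reversed time orientation, since $\oK$ and $\oK^\dagger$ play symmetric roles there). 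Linearity of $\Green^\pm$ is immediate from uniqueness: the solution operator is linear because the equation and boundary condition are linear and the solution is unique.

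The support property (ii) is essentially Proposition~\ref{prop:finite}: with $h=0$, the finite propagation of speed statement gives $\supp(\Green^+f)\subset J^+(\supp f\cap\T)\cap\T$ on each time strip, and letting $T\to\infty$ yields $\supp(\Green^+f)\subset J^+(\supp f)$; symmetrically for $\Green^-$. One should check that this support bound is independent of the auxiliary choice of Cauchy surface $\Sigma_0$, which follows again from uniqueness together with finite propagation speed (two such surfaces give solutions agreeing on the overlap of their causal futures). For property (i), the identity $\Dir\circ\Green^\pm f=f$ is just the statement that $\Green^\pm f$ solves $\Dir\psi=f$, which holds by construction. The reverse identity $\Green^\pm\circ\Dir\,g=g$ for $g\in\Gamma_{cc}(S\M)$ requires noting that $\Dir g\in\Gamma_{cc}(S\M)$ and that $g$ itself is \emph{a} solution of the Cauchy problem with source $\Dir g$; one must then verify that $g$ has the correct trivial initial data and satisfies the MIT boundary condition. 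The initial data: since $g$ is compactly supported in the interior, $g|_{\Sigma_0}=0$ once $\Sigma_0$ is chosen in the past of $\supp g$; the boundary condition $(\gamma(\n)-\imath)g|_{\bM}=0$ holds trivially because $g$ vanishes near $\bM$. So by uniqueness $\Green^+(\Dir g)=g$, and similarly for $\Green^-$.

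The main subtlety — and the step I would be most careful about — is the matching of domains and codomains: $\Green^\pm$ must take $\Gamma_{cc}(S\M)$ into $\Gamma_{sc}(S\M)$, and spacelike compactness of $\Green^\pm f$ is exactly what Proposition~\ref{prop:finite} delivers (the support lies in $J^+(\supp f)$, which meets every Cauchy surface in a compact set because $\supp f$ is compact and the spacetime is globally hyperbolic). A second point worth a sentence is that the various Cauchy surfaces $\Sigma_s=t^{-1}(s)\cap\M$ from the chosen time function foliate $\M$, so for any compactly supported $f$ there is indeed an $s$ with $\supp f\subset J^+(\Sigma_s)$, making the construction well-defined. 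I would then remark that these properties are precisely the definition of Green hyperbolicity in the sense of \cite{bgp}, completing the proof. No genuinely new analytic input is needed beyond Theorem~\ref{maintheorem} and Proposition~\ref{prop:finite}; the work is purely in assembling the solution operator and bookkeeping supports.
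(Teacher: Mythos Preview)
Your proposal is correct and follows essentially the same approach as the paper: define $\Green^+f$ as the unique solution of~\eqref{CauchyDir} with source $f$, zero initial data on a Cauchy surface $\Sigma_{t_0}$ chosen in the past of $\supp f$, and MIT boundary condition, then read off (i) and (ii) from Theorem~\ref{maintheorem} and Proposition~\ref{prop:finite}. Your verification of $\Green^+\circ\Dir\,g=g$ (checking that $g\in\Gamma_{cc}(S\M)$ itself solves the relevant Cauchy problem, with the boundary condition holding trivially since $g$ vanishes near $\bM$) is in fact spelled out more carefully than in the paper's own argument.
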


\begin{proof}
Let $f\in \Gamma_{cc}(S\M)$ and choose $t_0\in \RR$ such that $\supp f \subset J^+( \Sigma_{t_0})$.
By Theorem~\ref{maintheorem}, there exists a unique solution $\psi(f)$ to the Cauchy problem 
$$
\begin{cases}
\Dir \psi =  f\\
\psi|_{\Sigma_0} = 0 \\
\Mbc\Psi|_{\bM} =0.
\end{cases}
$$
For $f\in \Gamma_{cc}(S\M)$ we set  
$\Green^+f\define\psi$ and notice that $\Dir\circ\Green^+f= \Dir \psi = f$. Note that by the finite speed of propagation, cf. Proposition~\ref{prop:finite}, $\Green^+f\in \Gamma_{sc}(S\M)$. Moreover, $\Green^+ \circ \Dir \psi= \Green^+f=\psi$ which finishes the proof of (i). By Proposition~\ref{prop:finite}, we obtain $\supp\Green^+f \subset J^+(f)$ and this conclude the proof of (ii). \\
The existence of the retarded Green operator $\Green^-$ is proven analogously.
\end{proof}

\vspace{0.5cm}

\end{document}